\documentclass[12pt]{amsart}

\title{Periodic points of polynomials over finite fields}
\author{Derek Garton}
\address{Fariborz Maseeh Department of Mathematics and Statistics, Portland State University}
\email{\href{mailto:gartondw@pdx.edu}{gartondw@pdx.edu}}
\date{\today}

\makeatletter
\@namedef{subjclassname@2020}{\textup{2020} Mathematics Subject Classification}
\makeatother

\subjclass[2020]{Primary 37P05;
Secondary 37P25, 37P35, 11T06, 13B05}

\keywords{Arithmetic Dynamics, Periodic Points, Finite Fields, Galois Theory}

\usepackage{amssymb,url,MnSymbol,colonequals}

\usepackage{hyperref}
\hypersetup{breaklinks=true,
colorlinks=true,
urlcolor=blue,
linkcolor=cyan
}

\usepackage[top=1in,bottom=1in,left=1in,right=1in]{geometry}

\usepackage[capitalise,nameinlink]{cleveref}

\newcommand{\A}{\ensuremath{\mathbb{A}}}
\newcommand{\C}{\ensuremath{\mathbb{C}}}
\newcommand{\Z}{\ensuremath{\mathbb{Z}}}

\renewcommand{\P}{\ensuremath{\mathbb{P}}}

\newcommand{\R}{\ensuremath{\mathbb{R}}}
\newcommand{\F}{\ensuremath{\mathbb{F}}}

\newcommand{\lv}{\ensuremath{\left\vert}}
\newcommand{\rv}{\ensuremath{\right\vert}}
\newcommand{\lV}{\ensuremath{\left\Vert}}
\newcommand{\rV}{\ensuremath{\right\Vert}}
\newcommand{\lp}{\ensuremath{\left(}}
\newcommand{\rp}{\ensuremath{\right)}}
\newcommand{\lb}{\ensuremath{\left\{}}
\newcommand{\rb}{\ensuremath{\right\}}}
\newcommand{\lc}{\ensuremath{\left[}}
\newcommand{\rc}{\ensuremath{\right]}}

\newcommand{\p}{\ensuremath{\mathfrak{p}}}

\newcommand{\q}{\ensuremath{\mathfrak{q}}}
\newcommand{\qover}{\ensuremath{\mathfrak{Q}}}
\newcommand{\vv}{\ensuremath{\mathfrak{v}}}
\newcommand{\ww}{\ensuremath{\mathfrak{w}}}

\DeclareMathOperator{\Gal}{Gal}
\DeclareMathOperator{\Aut}{Aut}
\DeclareMathOperator{\Per}{Per}
\DeclareMathOperator{\Char}{char}

\DeclareMathOperator{\id}{id}
\DeclareMathOperator{\Frac}{Frac}
\DeclareMathOperator{\Spec}{Spec}
\DeclareMathOperator{\Crit}{Crit}
\DeclareMathOperator{\norm}{N}
\DeclareMathOperator{\Fix}{F}
\DeclareMathOperator{\fix}{f}
\DeclareMathOperator{\ar}{arch}
\DeclareMathOperator{\PGL}{PGL}

\theoremstyle{plain}
\newtheorem{theorem}{Theorem}[section]
\newtheorem{lemma}[theorem]{Lemma}
\newtheorem{corollary}[theorem]{Corollary}
\newtheorem{proposition}[theorem]{Proposition}
\newtheorem{fact}[theorem]{Fact}
\newtheorem{porism}[theorem]{Porism}
\newtheorem*{namedthm}{\namedthmname}
\newcounter{namedthm}

\theoremstyle{remark}
\newtheorem{remark}[theorem]{Remark}

\theoremstyle{definition}
\newtheorem{definition}[theorem]{Definition}

\makeatletter
\newenvironment{named}[1]
  {\def\namedthmname{#1}%
   \refstepcounter{namedthm}%
   \namedthm\def\@currentlabel{#1}}
  {\endnamedthm}
\makeatother

\begin{document}

\begin{abstract}
Fix an odd prime $p$.
If $r$ is a positive integer and $f$ a polynomial with coefficients in $\F_{p^r}$, let $P_{p,r}(f)$ be the proportion of $\P^1\lp\F_{p^r}\rp$ that is periodic with respect to $f$.
We show that as $r$ increases, the expected value of $P_{p,r}(f)$, as $f$ ranges over quadratic polynomials, is less than $22/\lp\log{\log{p^r}}\rp$.
This result follows from a uniformity theorem on specializations of dynamical systems of rational functions over residually finite Dedekind domains.
The specialization theorem generalizes previous work by Juul et al.\ that holds for rings of integers of number fields.
Moreover, under stronger hypotheses, we effectivize this uniformity theorem by using the machinery of heights over general global fields; this version of the theorem generalizes previous work of Juul on polynomial dynamical systems over rings of integers of number fields.
From these theorems we derive effective bounds on image sizes and periodic point proportions of families of rational functions over finite fields.
\end{abstract}

\maketitle

\section{Introduction}
\label{intro}

A \emph{\textup{(}discrete\textup{)} dynamical system} is a pair $\lp S,f\rp$ consisting of a set $S$ and a function $f\colon S\to S$.
For notational convenience, for any positive integer $n$, we let $f^n=\overbrace{f\circ\cdots\circ f}^{n\text{ times}}$; furthermore, we set $f^0=\id_S$.
For any $\alpha\in S$, if there is some positive integer $n$ such that $f^n(\alpha)=\alpha$, we say that $\alpha$ is \emph{periodic} (for $f$).
Let $\Per{\lp S,f\rp}=\lb\alpha\in S\mid\alpha\text{ is periodic for }f\rb$.
One of the results of this paper is the following application of \cref{degtwoave}.
\begin{corollary}\label{easytostate}
Suppose $p$ is an odd prime and $r$ is a positive integer.
If $r>6\log{p}$, then
\[
\frac{1}{\lv\lb f\in\F_{p^r}[X]\mid\deg{f}=2\rb\rv}
\cdot\sum_{\substack{f\in\F_{p^r}[X]\\\deg{f}=2}}
{\frac{\lv\Per{\lp\P^1\lp\F_{p^r}\rp,f\rp}\rv}{\lv\P^1\lp\F_{p^r}\rp\rv}}
<\frac{22}{\log{\log{p^r}}}.
\]
\end{corollary}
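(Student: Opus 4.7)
The plan is to deduce \cref{easytostate} directly from \cref{degtwoave}, combined with a conjugation step that reduces the uniform average over all quadratics to the one-parameter normal form $X^2+t$.

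First I would normalize the family. Since $p$ is odd, every $aX^2+bX+c\in\F_{p^r}[X]$ with $a\neq 0$ is conjugate by an affine map $X\mapsto\alpha X+\beta$ to $X^2+t$ for a unique $t\in\F_{p^r}$; the stabilizer of $X^2+t$ in the affine group $\lb X\mapsto\alpha X+\beta\mid\alpha\in\F_{p^r}^{\times},\beta\in\F_{p^r}\rb$ is trivial (as $p$ is odd, $2\beta=0$ forces $\beta=0$), so the $\lp p^r-1\rp p^{2r}$ quadratics partition into $p^r$ affine orbits of equal size, one per value of $t$. Since $\lv\Per{\lp\P^1\lp\F_{p^r}\rp,f\rp}\rv$ is a $\PGL_2$-conjugation invariant and the affine group sits inside $\PGL_2\lp\F_{p^r}\rp$ as the stabilizer of $\infty$, the left-hand side of \cref{easytostate} equals
\[
\frac{1}{p^r}\sum_{t\in\F_{p^r}}\frac{\lv\Per{\lp\P^1\lp\F_{p^r}\rp,X^2+t\rp}\rv}{\lv\P^1\lp\F_{p^r}\rp\rv}.
\]

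Second I would view $X^2+T$ as a degree-$2$ rational self-map of $\P^1$ over the global function field $\F_p\lp T\rp$ (equivalently, over the Dedekind domain $\F_p[T]$) and invoke \cref{degtwoave} on this one-parameter family. That theorem is engineered to bound the expected periodic-point proportion over $\F_{p^r}$-specializations of such families; applied to $X^2+T$ it should directly yield the required bound $22/\log\log p^r$ on the displayed average once $r>6\log p$.

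The main obstacle I anticipate is quantitative bookkeeping rather than any new dynamical idea: pinning down the explicit constant $22$ (as opposed to $22+o\lp 1\rp$) and verifying that the hypothesis $r>6\log p$ exactly absorbs the lower-order error contributions. These errors come from the finite set of bad specializations (notably $t=0$, where $X^2$ has a periodic-point structure enriched by all elements of odd order in $\F_{p^r}^{\times}$) and from translating between the height-theoretic estimates supplied by \cref{degtwoave} and the explicit counting statement of \cref{easytostate}.
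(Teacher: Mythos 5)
Your proposal misidentifies what \cref{degtwoave} actually says, and this leads you to duplicate work that is already packaged inside it while skipping the step that constitutes the entire content of the paper's proof of \cref{easytostate}.

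First, \cref{degtwoave} is not a specialization theorem about a one-parameter family $X^2+T$ over the global function field $\F_p(T)$; you are conflating it with \cref{dandm}, \cref{dandmave}, or \cref{wedidit}. As stated, \cref{degtwoave} is already the concrete numerical bound
\[
\frac{1}{\lv Q\rv}\sum_{f\in Q}\frac{\lv\Per\lp\P^1\lp\F_{q^r}\rp,f\rp\rv}{\lv\P^1\lp\F_{q^r}\rp\rv}
<\frac{q^r+1}{q^r-1}\lp\frac{\log16}{\log\lp\log{q^r}-\log{2}\rp-\log\max\lp\lb\log q^2,\log 16\rb\rp}+\frac{16}{q^{r/2}}\rp
\]
over the full set $Q$ of degree-$2$ polynomials. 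The affine-conjugation reduction to normal forms $X^2+t$ that you describe in your first step is not something you need to supply; it is a piece of the internal proof of \cref{degtwoave} itself (where the paper constructs the map $\mu$ taking $\alpha X^2+\beta X+\gamma$ to $X^2-(\beta^2-4\alpha\gamma-2\beta)/4$). Reproducing it as a preliminary step before invoking \cref{degtwoave} is harmless but does nothing.

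Second, and more seriously, you identify no mechanism for how the hypothesis $r>6\log p$ gets used or where the constant $22$ comes from. In the paper this is the entire proof: one must show that the right-hand side displayed above is smaller than $22/\log\log p^r$ once $r>6\log p$. The argument is an elementary but nonobvious chain of inequalities. Writing $x=\log p^r-\log 2$, the hypothesis gives $x>5(\log p)^2$, which one uses to show $x^2-(\log p^2)^2x-(\log p^2)^2\log 2>0$, hence $x/\log p^2>(x+\log 2)^{1/2}>(x+\log 2)^{\log 16/6}$, hence
\[
\frac{\log 16}{\log x-\log\log p^2}<\frac{6}{\log(x+\log 2)}=\frac{6}{\log\log p^r};
\]
combining this with $16/p^{r/2}<5/\log\log p^r$ and $\frac{q^r+1}{q^r-1}<2$ produces the constant $22=2\cdot(6+5)$. (The case $p=3$, where $\max\{\log p^2,\log 16\}=\log 16$ rather than $\log p^2$, needs a parallel computation.) None of this appears in your proposal; you flag it as ``quantitative bookkeeping'' to be done later, but that is precisely the deduction being asked for, and there is nothing else to prove once \cref{degtwoave} is in hand. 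Your worry about the ``bad specialization'' $t=0$ is also misplaced at this level: \cref{degtwoave} is a statement about a finite average and already accounts for every $f\in Q$, so no exceptional values of $t$ need separate treatment here.
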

\noindent This corollary addresses Question~18.2 in \emph{Current trends and open problems in arithmetic dynamics}, ``To what extent do polynomial maps behave like random set maps?''~\cite{trendz}.
Indeed, the corresponding statistic for random set maps on $\P^1\lp\F_{p^r}\rp$ is: if $p$ is prime, then
\[
\frac{1}
{\lv\Aut_\textsf{Set}{\lp\P^1{\lp\F_{p^r}\rp}\rp}\rv}
\cdot\sum_{f\in\Aut_\textsf{Set}{\lp\P^1{\lp\F_{p^r}\rp}\rp}}
{\frac{\lv\Per{\lp\P^1\lp\F_{p^r}\rp,f\rp}\rv}{\lv\P^1\lp\F_{p^r}\rp\rv}}
=O\lp p^{-\frac{r}{2}}\rp
\]
(see~\cite[Theorem~2]{FO}).
That is, \cref{easytostate} shows that for any odd prime $p$: as $r\to\infty$, the expected value of $\lv\Per{\lp\P^1\lp\F_{p^r}\rp,f\rp}\rv\cdot\lv\P^1\lp\F_{p^r}\rp\rv^{-1}$ approaches 0, whether $f$ ranges over the $p^{3r}-p^{2r}$ quadratic polynomials in $\F_{p^r}[X]$ or the $\lp p^r+1\rp^{p^r+1}$ set maps in $\Aut_\textsf{Set}{\lp\P^1{\lp\F_{p^r}\rp}\rp}$.

For numerical data and conjectures on the statistics of the periodic points of dynamical systems of polynomials of a fixed degree over finite fields (and other statistics), see~\cite{Ketal}.
In~\cite{FG}, the authors bound from below the number of periodic points and number of cycles of these dynamical systems, if the degree of the polynomials grows with the size of the finite field of coefficients; moreover, this bound is consistent with the statistics of random maps.
For fixed-degree polynomials, the authors of~\cite{Betal} prove that a ``noncorrelation'' heuristic implies that the statistics of the numbers of cycles in these dynamical systems match those of random set maps, providing a heuristic justification for Question~18.2 of~\cite{trendz}, mentioned above.

We now mention a different approach to studying the randomness of dynamical systems associated to polynomials over finite fields.
Suppose that $R$ is a commutative ring and $f\in R[X]$.
For any $\p\in\Spec{(R)}$, write $[R]_\p$ for $\Frac{\lp R/\p\rp}$ and $[f]_\p$ for the image of $f$ under the $R$-algebra morphism
\[
R[X]\twoheadrightarrow\lp R/\p\rp[X]\hookrightarrow[R]_\p[X].
\]
With this notation, we see that $\Spec{(R)}$ parameterizes a family of dynamical systems: indeed, to $\p\in\Spec{(R)}$ we associate the dynamical system $\lp\P^1\lp[R]_\p\rp,[f]_\p\rp$.
Of course, if $R$ is residually finite, then (for nonzero primes) this family consists of polynomials acting on a finite set, so we may once again ask: how random are its statistics?
As an example of the utility of this approach, we recall that Pollard's famous ``rho'' method of factorization~\cite{Pollard} relies on an aspect of the purported randomness of the family of dynamical systems associated to $R=\Z$ and $f=X^2+1$.
For recent work on this approach to studying randomness, see~\cite{JKMT,BG1,BG2,JuulP}.
To ease notation, for any commutative ring $R$, we write $\mathcal{P}_R$ for the nonzero prime ideals of $R$; moreover, if $R$ is residually finite, then for any $\p\in\mathcal{P}_R$, we write $\norm{(\p)}$ for $\lv[R]_\p\rv$.
We mention in particular a consequence of~\cite[Proposition~6.4]{JKMT}: if $R$ is the ring of integers of a number field and if there exists $\alpha\in R\setminus\lb-2\rb$ with $f=X^2+\alpha$, then
\[
\lim_{\substack{\p\in\mathcal{P}_R\\
\norm{(\p)}\to\infty}}
{\frac{\lv\Per{\lp\P^1\lp[R]_\p\rp,\lc f\rc_\p\rp}\rv}
{\lv\P^1\lp[R]_\p\rp\rv}}
=0.
\]
Moreover, an effective version of this result follows from \cite[Theorem~1.5~(b)]{JuulP}.
In this paper, we generalize this work of \cite{JKMT} and~\cite{JuulP} to address the case where $R$ is a residually finite Dedekind domain.
One benefit of this generalization is it allows us to bring to bear the techniques of \cite{JKMT,JuulP} on the study of the statistics of the dynamical systems associated to the set of polynomials of fixed degree over a fixed finite field.
For example, if we set $R=\F_p[s]$ and $f=X^2+s\in R[X]$, then for any $r\in\Z_{\geq1}$, the family
\[
\lb\lp\P^1\lp[R]_\p\rp,[f]_\p\rp\mid\p\in\mathcal{P}_R\text{ and }\norm{(\p)}=p^r\rb
\]
parameterizes nearly all (Galois orbits of) dynamical systems in the family
\[
\lb\lp\P^1\lp\F_{p^r}\rp,X^2+\alpha\rp\mid\alpha\in\F_{p^r}\rb.
\]

We now describe the methods, results, and organization of this paper.
In \cref{specialization}, we address the issue of preservation of Galois actions under specialization.
Specifically, if we let
\begin{itemize}
\item
$R$ be a Noetherian integral domain,
\item
$A$ be a finitely-generated $R$-algebra that is an integrally closed Noetherian integral domain, and
\item
$L$ be a finite Galois extension of $\Frac{(A)}$,
\end{itemize}
and we write $K$ for $\Frac{(A)}$ and $B$ for the integral closure of $A$ in $L$, then \cref{reduction} provides criteria than ensure the existence of a nonempty open subset $\mathcal{P}_{A,L}$ of $\Spec{(R)}$ such that for all $\p\in\mathcal{P}_{A,L}$, the ideals $\p A$ and $\p B$ are prime, the extension $[B]_{\p B}/[A]_{\p A}$ is Galois, and the $\Gal{\lp L/K\rp}\simeq\Gal{\lp[B]_{\p B}/[A]_{\p A}\rp}$.
\cref{reduction} is a generalization of Proposition~4.1 of~\cite{JKMT}, which holds when $\Char{(R)}=0$.
As one might expect, the proof of \cref{reduction} must address issues of separability.

As our general results hold for rational functions in addition to polynomials, we pause to introduce some notation.
If $k$ is a field and $\phi\in k(X)$, by the \emph{splitting field of $\phi$ over $k$} we mean the splitting field of any polynomial $f\in k[X]$ with the property that there exists $g\in k[X]$ such that $\phi=f/g$ and $\gcd{(f,g)}=1$.
If $R$ is an integral domain with $k=\Frac{(R)}$, then for any rational function $\phi\in k(X)$, prime $\p\in\Spec{(R)}$, and polynomials $f,g\in R[X]$ with $\phi=f/g$ and $[g]_\p\neq0$, we write $[\phi]_\p$ for $[f]_\p/[g]_\p$.
In \cref{imagesizeone}, we apply \cref{reduction} to the study of the periodic points of dynamical systems of rational functions with coefficients in (the fraction field of) a residually finite integral domain.
To do this, we recall an \ref{effCheb}~\cite[Theorem~2.1]{JuulP}, which states that if a rational function over a finite field satisfies certain Galois constraints, then there are effective bounds on the image size of (iterates of) that function.
Using this result along with \cref{reduction}, we prove in \cref{onenn} that for any
\begin{itemize}
\item
residually finite Dedekind domain $R$ with field of fractions $k$,
\item
rational function $\phi\in k(X)$, and
\item
positive integer $n$,
\end{itemize}
certain Galois hypotheses on the splitting field of $\phi^n(X)-t$ over $k(t)$ imply that for all but finitely many  $\p\in\mathcal{P}_R$, the quantity $\lv[\phi]_\p^n\lp[R]_\p\rp\rv$ is effectively bounded in terms of $\norm{(\p)}$.
This result is a generalization of \cite[Proposition~5.3]{JKMT}, which holds in characteristic zero. 
As a consequence, we obtain \cref{finalgeneral}, which states that if these hypotheses are satisfied for all $n\in\Z_{\geq1}$, then
\[
\lim_{\substack{\p\in\mathcal{P}_R\\
\norm{(\p)}\to\infty}}
{\frac{\lv\Per{\lp\P^1\lp[R]_\p\rp,[\phi]_\p\rp}\rv}
{\lv\P^1\lp[R]_\p\rp\rv}}=0;
\]
this is a generalization of \cite[Corollary~5.4]{JKMT}, which holds when $R$ is the ring of integers of a number field.

In \cref{imagesizetwo}, we prove \cref{wedidit}, an effective version of \cref{onenn} and \cref{finalgeneral}, under the hypothesis that the critical points of $\phi\in k(X)$ lie in $\P^1(k)$ and do not collide under iterations of $\phi$.
This hypothesis ensures that for all $n\in\Z_{\geq1}$, the Galois groups of splitting fields of $\phi^n(X)-t$ over $k(t)$ are certain wreath products (see the \ref{jkmtthm}, due to~\cite{Odoni,JKMT}, in \cref{imagesizetwo}).
Our work in \cref{heights}---specifically, \cref{juulheightgeneral}---guarantees that the critical points of specializations (at primes of large enough norm) of $\phi$ retain this property; to prove it, we introduce the machinery of heights on global fields.
A version of these results has appeared in \cite[Section~7]{JuulP}, in the special case where $\phi$ is a polynomial and $k$ is a number field.

Finally, in \cref{applications}, we apply known statistics of wreath products due to~\cite{JuulP} to deduce our main results.
Indeed, \cref{effectiveimagesize} provides an effective bound on periodic points of rational functions over global fields in the case where the Galois group of the function field extension generated by generic preimages is of the form studied in~\cite{JuulP}.
Turning to more specific applications, we also prove the following theorem.
\begin{theorem}\label{dandm}
Suppose that $q$ is a prime power, that $r,m\in\Z_{\geq1}$, that $d\in\Z_{\geq2}$, and that $\alpha\in\F_{q^r}$.
If $\F_q(\alpha)=\F_{q^r}$, $q\equiv1\pmod{d}$, and $r>\max{\lp\lb2md^2,4d\log_q{(d!)}\rb\rp}$, then
\[
\frac{\lv\Per{\lp \P^1\lp\F_{q^r}\rp,X^d+\alpha^m\rp}\rv}
{\lv\P^1\lp\F_{q^r}\rp\rv}
<\frac{4\log{d}}
{(d-1)\lp\log{\lp\log{q^r}-\log{2}\rp}-\log{\max{\lp\lb\log{q^{2m}},\log{(d!)^4}\rb\rp}}\rp}
+\frac{7d}{q^{\frac{r}{2}}}.
\]
\end{theorem}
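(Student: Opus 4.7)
The plan is to invoke \cref{effectiveimagesize} with the residually finite Dedekind domain $R=\F_q[s]$, field of fractions $k=\F_q(s)$, rational function $\phi(X)=X^d+s^m\in k(X)$, and the prime $\p\in\mathcal{P}_R$ generated by the minimal polynomial of $\alpha$ over $\F_q$. Since $\F_q(\alpha)=\F_{q^r}$, this minimal polynomial has degree $r$; hence $\p$ is maximal with $[R]_\p\simeq\F_{q^r}$ and $[\phi]_\p=X^d+\alpha^m$, so $\norm(\p)=q^r$. The theorem will then follow by reading off the general effective bound at this one prime.

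I first check that $\phi$ satisfies the critical-point hypothesis of \cref{imagesizetwo}. The congruence $q\equiv1\pmod{d}$ forces $\gcd\lp d,\Char\F_q\rp=1$ (else the common prime would divide $1$), so $\Crit(\phi)=\lb0,\infty\rb\subset\P^1(k)$; the totally ramified fixed point $\infty$ has orbit $\lb\infty\rb$, while the orbit of $0$ is $0,s^m,s^{dm}+s^m,\ldots$, of strictly increasing $s$-degree, and thus the two critical orbits are disjoint. Moreover $q\equiv1\pmod{d}$ places a primitive $d$th root of unity in $\F_q\subseteq k$, so the \ref{jkmtthm} identifies the Galois group of the splitting field of $\phi^n(X)-t$ over $k(t)$ with the full iterated wreath product of $n$ copies of the cyclic group of order $d$, for every $n\geq1$.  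This is exactly the Galois-theoretic input demanded by \cref{effectiveimagesize}.

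To transfer the no-collision property to the specialization $[\phi]_\p$, I appeal to \cref{juulheightgeneral}. The critical value $s^m$ has a controlled Weil height over $k$, and the $n$th layer of the wreath-product tower has degree dividing $(d!)^n$; substituting these two bounds into \cref{juulheightgeneral} produces a threshold on $\norm(\p)$ beyond which the critical orbits of $[\phi]_\p$ remain disjoint up to the iteration depth relevant to \cref{effectiveimagesize}. The bound $r>\max\lp\lb2md^2,4d\log_q(d!)\rb\rp$ is designed to exceed this threshold: the term $2md^2$ arises from the height of $s^m$ amplified by the degree-$d$ growth of iterates, while $4d\log_q(d!)$ arises from the ramification of the wreath-product tower (the $\log_q$ converting the additive height into a condition on $\norm(\p)=q^r$).

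With every hypothesis of \cref{effectiveimagesize} verified at $\p$, the inequality follows from the explicit form of that theorem: the main term is the result of optimizing, over the iteration depth $n$, a probabilistic periodic-point estimate derived from the group theory of the iterated wreath products of $C_d$ (contributing the constants $4\log d$ and $d-1$ and the height corrections $\log q^{2m}$ and $\log(d!)^4$), while the $7d/q^{r/2}$ term is the standard effective Chebotarev/Lang--Weil remainder. The main obstacle is the explicit bookkeeping: one must verify that the abstract threshold of \cref{juulheightgeneral} simplifies exactly to $r>\max\lp\lb2md^2,4d\log_q(d!)\rb\rp$, and that the abstract effective bound of \cref{effectiveimagesize} collapses to the stated closed form with precisely these four numerical constants.
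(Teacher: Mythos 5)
Your high-level plan (specialize $\phi = X^d + s^m$ over $R = \F_q[s]$ at the prime generated by the minimal polynomial of $\alpha$, use $q\equiv1\pmod d$ to get a cyclic first-level Galois group, and use the disjointness of the forward orbit of $0$) matches the paper, but the theorem you invoke cannot deliver the result. \cref{effectiveimagesize} inherits its conclusion from \cref{wedidit}, which only asserts ``there exists $N\in\Z_{\geq1}$'' — an ineffective constant — so it cannot produce the explicit threshold $r>\max\lp\lb2md^2,4d\log_q(d!)\rb\rp$ that the theorem demands. Moreover the bound you would get from \cref{effectiveimagesize} has the shape $\frac{4\log d}{\log\log\norm(\vv)} + \frac{7d}{\norm(\vv)^{3/2-\epsilon}}$; there is no $(d-1)$ in the denominator, no $\log q^{2m}$, and no $\log(d!)^4$, so it is not the closed form you are asked to prove. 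The extra factor of $(d-1)$ comes from the cyclic-group estimate $\fix_n(\rho_{C,d})\leq \frac{2}{(d-1)(n+1)}$ in \ref{jfixed}, and the $\log q^{2m}$ and $\log(d!)^4$ come from plugging the explicit value $c_{k,\phi,1,\lb0\rb}=q^m$ into $n_{k,f,g,C,1}$ from \cref{exdef}; neither enters \cref{effectiveimagesize} as stated.

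What the paper actually does is bypass \cref{wedidit} and \cref{effectiveimagesize} entirely in favor of the Porism \cref{effectivewedidit}, which is fully explicit but requires hand-verifying the Galois/integrality conditions that the ineffective part of \cref{wedidit} would otherwise produce via \cref{onelevel}. Concretely, one observes that since $q\equiv1\pmod d$ puts a primitive $d$th root of unity in $\F_q$, the splitting field is $L_1 = \F_q(s,u)$ with $u$ a single root, so the integral closure $B_1$ of $A = \F_q[s,t]$ in $L_1$ is exactly $\F_q[s,u]$. From this one reads off directly that for every monic irreducible $\pi\in R$ the ideal $\p_\pi B_1$ is prime, $[R]_{\p_\pi}$ is algebraically closed in $[B_1]_{\p_\pi B_1}$, $[f]_{\p_\pi A}$ is separable (as $\gcd(q,d)=1$), and then \cref{prereduction} gives the Galois and isomorphism conditions. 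This lets one take $N_1 = q$ and compute $c_{k,\phi,1,\lb0\rb} = q^m$, so that \cref{effectivewedidit} with $\epsilon=1$ (and $\ar(k)=0$) applies for all $\norm(\p)>\max\lp\lb q,\, q^{2md^2},\,(d!)^{4d}\rb\rp$ — which is exactly your stated degree condition. Plugging the cyclic bound from \ref{jfixed} into $n_{k,\phi,1,\lb0\rb,1}(\vv_\pi)$ then gives the stated inequality. In short, your proof is missing the explicit determination of $B_1$ and the direct verification of the specialization hypotheses of \cref{effectivewedidit}; without these you cannot make the constants effective, and invoking \cref{effectiveimagesize} gives a bound of the wrong shape with a non-effective threshold.
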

\noindent Moreover, in \cref{dandmave} we prove a version of \cref{easytostate} that holds for unicritical polynomials of arbitrarily large degree.
\cref{applications} also contains a proof of the following more precise version of \cref{easytostate}.
\begin{theorem}\label{degtwoave}
Suppose $q$ is a power of an odd prime and $r\in\Z_{\geq1}$.
If $r>8$, then
\begin{align*}
\frac{1}{\lv\lb f\in\F_{q^r}[X]\mid\deg{f}=2\rb\rv}
\cdot\sum_{\substack{f\in\F_{q^r}[X]\\\deg{f}=2}}
{\frac{\lv\Per{\lp\P^1\lp\F_{q^r}\rp,f\rp}\rv}{\lv\P^1\lp\F_{q^r}\rp\rv}}\\
&\hspace{-180px}<\frac{q^r+1}{q^r-1}\lp\frac{\log16}{\log{\lp\log{q^r}-\log{2}\rp}-\log{\max{\lp\lb\log{q^2},\log{16}\rb\rp}}}
+\frac{16}{q^{\frac{r}{2}}}\rp.
\end{align*}
\end{theorem}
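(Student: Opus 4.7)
The plan is to reduce the average on the left of \cref{degtwoave} to an average over the one-parameter unicritical family $\lb X^2+\gamma\mid\gamma\in\F_{q^r}\rb$, then to split that average according to whether $\gamma$ is a primitive element of $\F_{q^r}/\F_q$, and finally to apply \cref{dandm} to the primitive summands.

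For the reduction step, since $q$ is odd every $f(X)=aX^2+bX+c\in\F_{q^r}[X]$ with $a\ne0$ is $\PGL_2(\F_{q^r})$-conjugate to $X^2+\gamma_f$, where $\gamma_f=ac+b/2-b^2/4$, via the affine map $L(X)=X/a-b/(2a)$. The fiber of $(a,b,c)\mapsto\gamma_f$ over each $\gamma\in\F_{q^r}$ has size exactly $q^r(q^r-1)$: for every $(a,b)\in\F_{q^r}^\times\times\F_{q^r}$ the unique value $c=(\gamma-b/2+b^2/4)/a$ gives $\gamma_f=\gamma$. Because conjugation in $\PGL_2(\F_{q^r})$ preserves the set of periodic points of a dynamical system on $\P^1(\F_{q^r})$, this fiber equality lets me rewrite the left-hand side of \cref{degtwoave} as
\[
\frac{1}{q^r}\sum_{\gamma\in\F_{q^r}}\frac{\lv\Per{\lp\P^1\lp\F_{q^r}\rp,X^2+\gamma\rp}\rv}{\lv\P^1\lp\F_{q^r}\rp\rv}.
\]

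Next I would partition the $\gamma$'s into the primitive set $G=\lb\gamma\in\F_{q^r}\mid\F_q(\gamma)=\F_{q^r}\rb$ and its complement. Since $\F_{q^r}\setminus G\subseteq\bigcup_{s\mid r,\,s<r}\F_{q^s}$, the standard geometric bound $\sum_{s=1}^{r/2}q^s\le 2q^{r/2}$ (valid for $q\ge3$) gives $\lv\F_{q^r}\setminus G\rv\le 2q^{r/2}$. For each $\gamma\in G$, \cref{dandm} applied with $d=2$, $m=1$, and $\alpha=\gamma$---whose hypotheses $q\equiv 1\pmod{2}$ and $r>\max{\lp\lb 8,\,8\log_q{2}\rb\rp}=8$ are exactly what we have---yields, after substituting $\tfrac{4\log d}{d-1}=\log 16$, $\log(d!)^4=\log 16$, and $7d=14$,
\[
\frac{\lv\Per{\lp\P^1\lp\F_{q^r}\rp,X^2+\gamma\rp}\rv}{\lv\P^1\lp\F_{q^r}\rp\rv}<\frac{\log 16}{\log{\lp\log{q^r}-\log 2\rp}-\log{\max{\lp\lb\log{q^2},\log 16\rb\rp}}}+\frac{14}{q^{r/2}}.
\]
For the at most $2q^{r/2}$ non-primitive $\gamma$, use the trivial summand bound $1$. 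Summing all $q^r$ contributions, dividing by $q^r$, and combining the two $q^{-r/2}$ pieces produces an upper bound of the shape $A+16/q^{r/2}$, where $A$ is the logarithmic main term. The harmless factor $(q^r+1)/(q^r-1)\ge 1$ then absorbs any remaining slack (in particular the ratio $\lv G\rv/q^r<1$) and recovers the right-hand side of \cref{degtwoave}.

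The main obstacle will be the opening reduction: the fiber count must be a genuine equality rather than an inequality, so that the re-indexed average over the unicritical family coincides with the original average and not just a comparable weighted version. Once that is in place, only the subfield counting of non-primitive elements and a single invocation of \cref{dandm} remain, both routine.
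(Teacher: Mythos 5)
Your proof is correct, and in fact it proves a slightly stronger inequality than the one stated. You take essentially the same route as the paper---conjugate a general degree-two $f$ to a unicritical form $X^2+\gamma$ by an affine map, count fibers, split according to whether $\gamma$ generates $\F_{q^r}$ over $\F_q$, and invoke \cref{dandm} with $d=2$, $m=1$---but you compute the fiber of the conjugation map correctly, where the paper contains an arithmetic slip. The paper asserts that $\lv\mu^{-1}\lp X^2+\delta\rp\rv$ equals $q^{2r}-q^r$, $q^{2r}+q^r$, or $q^{2r}$ according to whether $1-4\delta$ is a nonsquare, a nonzero square, or zero; but as you observe, for every $\delta$ the constraint $\alpha\gamma+\beta/2-\beta^2/4=\delta$ determines $\gamma$ uniquely from $(\alpha,\beta)\in\F_{q^r}^\times\times\F_{q^r}$, so the fiber has size exactly $q^r(q^r-1)$ for every $\delta$. (One can also see the paper's three values cannot be right, since they sum to $q^{3r}$ rather than to $\lv Q\rv=q^{3r}-q^{2r}$.) The paper's conclusion survives only because it replaces each fiber by the loose upper bound $q^{2r}+q^r$, and this is precisely what introduces the factor $(q^r+1)/(q^r-1)$ into the theorem statement. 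Your uniform fiber count shows that factor is unnecessary: you obtain the right-hand side without it, and the stated bound then follows a fortiori since $(q^r+1)/(q^r-1)\ge1$. One small infelicity in your write-up: the closing remark that the factor ``absorbs'' the ratio $\lv G\rv/q^r<1$ is not really doing any work---you have already used $\lv G\rv\le q^r$ in the summation, and the factor is simply $\ge1$---but this does not affect the correctness of the argument.
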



\section{Stability of Galois groups under specialization}
\label{specialization}

We start this section with some notation.
For any commutative ring $R$, prime $\p\in\mathcal{P}_R$, and element $\alpha\in R$, we write $[\alpha]_\p$ for the image of $\alpha+\p$ under the canonical injection $R/\p\hookrightarrow[R]_\p$.
Next, suppose that $A$ is an integrally closed domain and $L$ is a Galois extension of $\Frac{A}$, and write $B$ for the integral closure of $A$ in $L$.
If $\q$ a prime ideal of $A$ and $\qover$ a prime ideal of $B$ lying over $\q$, let
\[
D_{L,A}\lp\qover\vert\q\rp
\]
be the decomposition group of $\qover$ over $\q$ and
\[
\rho_{\qover\vert\q}\colon D_{L,A}\lp\qover\vert\q\rp\to\Aut_{[A]_\q}{\lp[B]_\qover/[A]_\q\rp}
\]
be the associated surjective homomorphism of groups.
We are now in the position to recall a fact from algebraic number theory.
The first, second, and fourth bullets below are immediate, and for the third bullet see, for example,~\cite[Proposition~VII.2.8]{Lang}.

\begin{fact}\label{prereduction}
Suppose that $A$ is an integrally closed domain with field of fractions $K$, that $L/K$ is a finite Galois extension, and that $\q$ is a prime ideal of $A$.
Write $B$ for the integral closure of $A$ in $L$ and $\mathfrak{I}$ for $\q B$.
Let $f\in A[X]$ be the minimal polynomial of an integral primitive element of the extension $L/K$.
If $\mathfrak{I}$ is prime and $[f]_\q$ is separable, then
\begin{itemize}
\item
$[B]_\mathfrak{I}/[A]_\q$ is a Galois extension,
\item
$D_{L,A}(\mathfrak{I}\vert\q)=\Gal{(L/K)}$,
\item
$\rho_{\mathfrak{I}\vert\q}$ is an isomorphism of groups, and
\item
for any $\alpha\in B$ and $\sigma\in\Gal{(L/K)}$,
\[
\rho_{\mathfrak{I}\vert\q}\lp\sigma\rp\lp[\alpha]_{\mathfrak{I}}\rp
=[\sigma(\alpha)]_{\mathfrak{I}}.
\]
\end{itemize}
\end{fact}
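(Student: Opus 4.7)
The plan is to verify the four bulleted claims in turn, relying on the standard theory of decomposition groups for integral extensions of integrally closed domains. Let $\theta$ denote the integral primitive element of $L/K$ whose minimal polynomial is $f$, and write $\theta_1,\ldots,\theta_n$ for its $\Gal(L/K)$-conjugates, all of which lie in $B$ because $\theta$ is integral over $A$.

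I would first observe that $\mathfrak{I}$ is the unique prime of $B$ lying over $\q$: any prime $\mathfrak{J}$ of $B$ with $\mathfrak{J}\cap A=\q$ contains $\q B=\mathfrak{I}$, and by the incomparability theorem for integral extensions this forces $\mathfrak{J}=\mathfrak{I}$. Since $\Gal(L/K)$ acts transitively on the set of primes of $B$ over $\q$, every Galois element fixes the singleton $\{\mathfrak{I}\}$, giving the second bullet $D_{L,A}(\mathfrak{I}\vert\q)=\Gal(L/K)$. The fourth bullet is essentially the construction of $\rho_{\mathfrak{I}\vert\q}$: each $\sigma$ in the decomposition group stabilizes $\mathfrak{I}$ and fixes $A$ pointwise, so it induces an $[A]_\q$-algebra automorphism of $B/\mathfrak{I}$ which extends uniquely to $[B]_\mathfrak{I}$; this induced map is by definition $\rho_{\mathfrak{I}\vert\q}(\sigma)$, and the identity $\rho_{\mathfrak{I}\vert\q}(\sigma)([\alpha]_\mathfrak{I})=[\sigma(\alpha)]_\mathfrak{I}$ follows at once.

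For the first bullet, the factorization $f=\prod_{i=1}^n(X-\theta_i)$ in $B[X]$ reduces to $[f]_\q=\prod_{i=1}^n(X-[\theta_i]_\mathfrak{I})$ in $[B]_\mathfrak{I}[X]$, and separability of $[f]_\q$ makes these roots pairwise distinct. The subextension $[A]_\q([\theta_1]_\mathfrak{I},\ldots,[\theta_n]_\mathfrak{I})$ of $[B]_\mathfrak{I}$ is therefore the splitting field of a separable polynomial, hence Galois over $[A]_\q$; combined with the third bullet, which gives $|\Aut_{[A]_\q}([B]_\mathfrak{I})|=|\Gal(L/K)|=n$, a degree count forces this subextension to exhaust $[B]_\mathfrak{I}$, yielding the first bullet.

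The main substantive content—and the step I would defer to Proposition~VII.2.8 of \cite{Lang}—is the third bullet, which amounts to showing that the inertia subgroup of $D_{L,A}(\mathfrak{I}\vert\q)$ is trivial, i.e., that $\q$ is unramified at $\mathfrak{I}$. The separability hypothesis on $[f]_\q$ is precisely what is designed to enforce this unramifiedness, and the cleanest route to the isomorphism of $\rho_{\mathfrak{I}\vert\q}$ is to invoke Lang directly rather than to reprove the inertia-versus-residue-separability dictionary from scratch.
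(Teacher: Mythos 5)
Your arguments for the second and fourth bullets are correct and in the same spirit as the paper: uniqueness of the prime over $\q$ (incomparability) plus transitivity of $\Gal(L/K)$ on primes over $\q$ gives $D_{L,A}(\mathfrak{I}\vert\q)=\Gal(L/K)$, and the fourth bullet is indeed just the construction of $\rho_{\mathfrak{I}\vert\q}$. Deferring the third bullet to Lang is exactly what the paper does.

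The gap is in the first bullet. You correctly produce a Galois subextension $E=[A]_\q\lp[\theta_1]_\mathfrak{I},\ldots,[\theta_n]_\mathfrak{I}\rp\subseteq[B]_\mathfrak{I}$, but the ``degree count'' does not force $E=[B]_\mathfrak{I}$. Knowing $\lv\Aut_{[A]_\q}\lp[B]_\mathfrak{I}\rp\rv=n$ gives no upper bound on $[[B]_\mathfrak{I}:[A]_\q]$: a priori $[B]_\mathfrak{I}$ could be a nontrivial purely inseparable extension of $E$, which leaves the automorphism group (and hence your count) unchanged. What is missing is a reason that $[B]_\mathfrak{I}$ is actually generated over $[A]_\q$ by the reductions of the $\theta_i$. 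The standard way to get this is the conductor estimate $f'(\theta)B\subseteq A[\theta]$, which holds for any integrally closed $A$ and separable $L/K$ by trace-form duality. Since $[f]_\q$ is separable and $[\theta]_\mathfrak{I}$ is a root of $[f]_\q$, one has $[f'(\theta)]_\mathfrak{I}=[f]_\q'\lp[\theta]_\mathfrak{I}\rp\neq0$, hence $f'(\theta)\notin\mathfrak{I}$ is invertible in $[B]_\mathfrak{I}$; reducing $f'(\theta)B\subseteq A[\theta]$ modulo $\mathfrak{I}$ then gives $[B]_\mathfrak{I}=[A]_\q\lp[\theta]_\mathfrak{I}\rp\subseteq E$, so $E=[B]_\mathfrak{I}$ and the extension is Galois. (Alternatively, once you have the third bullet from Lang, triviality of inertia together with the general theory of the residue extension being normal also yields separability, but that pushes more weight onto the cited reference than the degree-count phrasing suggests.)
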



We now recall two definitions from topology, which we will use only in the following remark and in the proof of \cref{reduction}.
If $\mathbf{X}$ is a Noetherian topological space, we say a subset of $\mathbf{X}$ is \emph{locally closed} if it an intersection of an open set and a closed set.
We say that a subset of $\mathbf{X}$ is \emph{constructible} if it is a finite union of locally closed sets.
The following remark is immediate; we will use it in the proof of~\cref{reduction}.

\begin{remark}\label{constructible}
Suppose $R$ is a Noetherian integral domain and $E$ is a constructible subset of $\Spec{(R)}$.
If $\lb0\rb\in E$, then $E$ contains a nonempty open subset of $\Spec{(R)}$.
\end{remark}

\begin{theorem}\label{reduction}
Suppose that $R$ is a Noetherian integral domain and that $A$ is a finitely-generated $R$-algebra that is an integrally closed integral domain.
Write $k,K$ for the fraction fields of $R,A$, respectively, and suppose that $L/K$ is a finite Galois extension.
Let $B$ be the integral closure of $A$ in $L$ and let $f\in A[X]$ be the minimal polynomial of an integral primitive element of the extension $L/K$.
If
\begin{itemize}
\item
$k$ is algebraically closed in $L$ and
\item
$K$ is separable over $k$,
\end{itemize}
then
\[
\lb\p\in\Spec{(R)}
\mid\p B\text{ is prime and }
[R]_\p\text{ is algebraically closed in }[B]_{\p B}\rb
\]
and
\[
\lb\p\in\Spec{(R)}
\mid[f]_{\p A}\text{ is irreducible and
separable}\rb
\]
both contain nonempty open subsets of $\Spec{(R)}$.
\end{theorem}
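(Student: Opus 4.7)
My plan is to verify that each of the two sets is a constructible subset of $\Spec(R)$ that contains the generic point $(0)$, and then to invoke \cref{constructible}. The containment is immediate: $(0)\cdot A$ and $(0)\cdot B$ are prime (as $A,B$ are domains), so $[A]_{(0)\cdot A}=K$, $[B]_{(0)\cdot B}=L$, and $[R]_{(0)}=k$; then the first set contains $(0)$ by the assumption that $k$ is algebraically closed in $L$, and the second contains $(0)$ because $f$ is the minimal polynomial of a primitive element of the separable extension $L/K$, hence irreducible and separable over $K$.

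To obtain constructibility, I would work with the finite-type morphisms $\Spec(A)\to\Spec(R)$, $\Spec(B)\to\Spec(R)$, and $\Spec(C)\to\Spec(R)$, where $C:=A[X]/(f)=A[\theta]$ for a primitive element $\theta$. First, observe that the two hypotheses together make $L/k$ a regular field extension: $k$ is algebraically closed in $L$ by assumption, and $L/k$ is separable because $K/k$ is separable and $L/K$ is separable (being Galois). Regularity of $L/k$ means $L\otimes_k\overline{k}$ is a domain, and since $A\otimes_R k\subseteq K$, $B\otimes_R k\subseteq L$, and $C\otimes_R k=(A\otimes_R k)[\theta]\subseteq L$, the generic fibers of all three morphisms are geometrically integral over $k$. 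I would then invoke Grothendieck's generic freeness together with the classical constructibility of the locus of geometrically integral fibers of a finite-type morphism between Noetherian schemes. These together exhibit a constructible subset of $\Spec(R)$ containing $(0)$ on which $A$, $B$, and $C$ are all $R$-flat with geometrically integral fibers. On such a subset: flatness gives $A/\p A\hookrightarrow A\otimes_R[R]_\p$ (and likewise for $B,C$), so $\p A,\p B,\p C$ are all prime; geometric integrality of the $B$-fiber makes $[B]_{\p B}/[R]_\p$ a regular extension of fields, so $[R]_\p$ is algebraically closed in $[B]_{\p B}$; and since $C\otimes_R[R]_\p=(A\otimes_R[R]_\p)[X]/([f]_{\p A})$ and $f$ is monic, geometric integrality of the $C$-fiber forces $[f]_{\p A}$ to be both irreducible (fiber is a domain) and separable (fiber is geometrically reduced) in $[A]_{\p A}[X]$.

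The main subtlety, and the actual point of departure from \cite[Proposition~4.1]{JKMT}, which treats only characteristic zero, is the positive-characteristic separability issue. The hypothesis that $K/k$ is separable is precisely what upgrades ``geometrically irreducible'' to ``geometrically integral'' for these fibers by making $L/k$ regular, and thereby propagates separability of $f$ to separability of $[f]_{\p A}$. Without that hypothesis the reduction of $f$ modulo $\p A$ could fail to be separable, and no amount of spreading out would restore it.
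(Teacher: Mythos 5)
Your approach matches the paper's in all essentials: both arguments show each set is constructible (via the EGA theorem on the locus of geometrically integral fibers of a finite-type morphism between Noetherian schemes) and contains the generic point $\lb0\rb$ (via the observation that the two hypotheses say precisely that $L/k$, and hence also $K/k$, is a regular extension), then invoke \cref{constructible}. You also correctly identify the positive-characteristic separability issue as the crux of the generalization beyond \cite[Proposition~4.1]{JKMT}, exactly as the paper does. The one place your write-up is more careful than the paper's is the passage from ``the fiber $B\otimes_R[R]_\p$ is a domain'' to ``$\p B$ is prime.'' The paper simply writes $B\otimes_R[R]_\p\simeq B/\p B$ for $\p\in E_1$, but for a non-maximal $\p$ the left side is a proper localization of $B/\p B$ (at the image of $R\setminus\p$), and a localization of a non-domain can be a domain; so the assertion needs an additional ingredient. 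Your insertion of generic freeness supplies it: after shrinking to the flat locus, the map $B/\p B\to B\otimes_R[R]_\p$ is injective, so domainhood descends and $\p B$ (likewise $\p A$) is genuinely prime. For the paper's downstream applications this distinction is invisible, since there $R$ is Dedekind and the relevant $\p$ are maximal, where $B/\p B=B\otimes_R[R]_\p$; but for the statement as given over an arbitrary Noetherian integral domain, your flatness step closes a small gap that the paper elides.
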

\begin{proof}
We begin by showing the first statement.
Let $\mathbf{X}=\Spec{\lp B\rp}$ and for all $\p\in\Spec{(R)}$, let $\mathbf{X}_\p=\Spec{\lp B\otimes_R[R]_\p\rp}$.
Then let
\[
E_1=\lb\p\in\Spec{(R)}\mid \mathbf{X}_\p\text{ is a geometrically integral $[R]_\p$-scheme}\rb.
\]
Since $A$ is an is integrally closed Noetherian domain and $f$ is separable, we know that $B$ is a finitely-generated $A$-algebra (see, for example, Proposition I.6 of~\cite{LangN}), hence a finitely-generated $R$-algebra.
Thus, we apply Theorem~9.7.7 of~\cite{EGA} to conclude that $E_1$ is a constructible set.
Now, for any $\p\in E_1$,
\begin{itemize}
\item
$\p B$ is prime, since $B\otimes_R[R]_\p\simeq B/\p B$, and
\item
$[R]_\p$ is algebraically closed in $[B]_{\p B}$, by Proposition~5.51 of~\cite{GW}, for example.
\end{itemize}
Hence, by \cref{constructible}, to show the first statement we need only show that $\mathbf{X}_{\lb0\rb}$ is geometrically integral.

To see that $\mathbf{X}_{\lb0\rb}$ is integral, let $S=R\setminus\lb0\rb$ and note that
\[
B\otimes_R k
=B\otimes_R \lp S^{-1}R\rp
\simeq S^{-1}B
\]
is a subring of the field $L$, so it is in particular an integral domain.
Thus, as the function field of $\mathbf{X}_{\lb0\rb}$ is $L$, the geometric integrality of $\mathbf{X}_{\lb0\rb}$ will follow if both the following conditions hold:
\begin{itemize}
\item
$k$ is algebraically closed in $L$ and
\item
$L$ is separable over $k$
\end{itemize}
(see, for example, Proposition~5.51 of~\cite{GW}).
But these conditions follow immediately from our hypotheses.

The proof of the second statement is similar.
Let $\mathbf{Y}=\Spec{\lp A[X]/ f\rp}$ and for all $\p\in\Spec{(R)}$, let $\mathbf{Y}_\p=\Spec{\lp\lp A[X]/ f\rp\otimes_R[R]_\p\rp}$.
Then let
\[
E_2=\lb\p\in\Spec{(R)}\mid \mathbf{Y}_\p\text{ is a geometrically integral $[R]_\p$-scheme}\rb.
\]
Since $f$ is the minimal polynomial of an integral element of the extension $L/K$, we know $A[X]/ f$ is a finitely-generated $A$-algebra, hence a finitely-generated $R$-algebra.
Thus, we apply Theorem~9.9.7 of~\cite{EGA} a second time to conclude that $E_2$ is a constructible set.
Now, for $\p\in\Spec{(R)}$, we know
\[
\lp A[X]/ f\rp\otimes_R[R]_\p\simeq\lp A/\p A\rp[X]/[f]_{\p A},
\]
so if $\p\in E_2$, then $[f]_{\p A}$ is irreducible and separable.
Once again, by \cref{constructible}, the second statement will now follow from the geometric integrality of $\mathbf{Y}_{\lb0\rb}$.

As above, the ring $\lp A[X]/ f\rp\otimes_Rk$ is isomorphic to a subring of $B$, so it is an integral domain.
And the function field of $\mathbf{Y}_{\lb0\rb}$ is $L$, so the result follows from our hypotheses and \cite[Proposition~5.51]{GW}.
\end{proof}

\section{Image size of specialized rational functions}
\label{imagesizeone}

Before applying \cref{reduction} to dynamical systems, we recall two facts about the dynamics of rational functions.
And before stating these facts, we mention some notation.
For any field $k$, if $\phi\in k(X)$ and $d\in\Z_{\geq0}$, we will write $\deg{\phi}=d$ if there exist $f,g\in k[X]$ such that
\begin{itemize}
\item
$\phi=f/g$,
\item
$\gcd{(f,g)}=1$, and
\item
$d=\max{\lp\lb\deg{f},\deg{g}\rb\rp}$.
\end{itemize}
We extend the definition of ``separable'' by saying that $\phi$ is \emph{separable over $k$} if there exist $f,g\in k[X]$ such that $\phi=f/g$ and $f$ is separable over $k$.
If $R$ is an integral domain with $k=\Frac{(R)}$, then for any $\phi\in k(X)$ and $\p\in\Spec{(R)}$, we say that $\phi$ has \emph{good reduction at $\p$} if $\deg{[\phi]_\p}=\deg{\phi}$.

\begin{fact}\label{plentygood}
Suppose that $R$ is an integral domain with field of fractions $k$ and that $\phi\in k(X)$.
If we write
\[
\mathcal{R}=\lb\p\in\Spec{(R)}\mid
\phi\text{ has good reduction at }\p\rb,
\]
then $\mathcal{R}$ contains a nonempty open subset of $\Spec{(R)}$.
\end{fact}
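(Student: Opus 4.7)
The plan is to exhibit an explicit nonempty principal open subset of $\Spec{(R)}$ contained in $\mathcal{R}$. First I would fix a representation $\phi=f/g$ with $f,g\in R[X]$ coprime in $k[X]$ and $d=\deg{\phi}=\max{\lp\lb\deg{f},\deg{g}\rb\rp}$: start from any coprime $k[X]$-representation realizing the degree of $\phi$ and multiply by a common denominator in $R$; this preserves coprimality in $k[X]$ because scalars from $R\setminus\lb0\rb$ are units in $k$.

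Good reduction at $\p$ requires two things about the reductions $[f]_\p,[g]_\p\in[R]_\p[X]$: that $[g]_\p\neq0$ (so $[\phi]_\p$ is defined), and that after any cancellation in $[R]_\p[X]$ the maximum of the degrees is still $d$. Both follow if the leading coefficients of $f$ and $g$ survive reduction and if $[f]_\p,[g]_\p$ are coprime in $[R]_\p[X]$. The coprimality will come from B\'ezout: since $k[X]$ is a PID and $\gcd{(f,g)}=1$ in $k[X]$, there exist $u,v\in k[X]$ with $uf+vg=1$; clearing denominators by some $c\in R\setminus\lb0\rb$ produces $u',v'\in R[X]$ with $u'f+v'g=c$.

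Let $a,b\in R\setminus\lb0\rb$ denote the leading coefficients of $f,g$, and set $U=D(abc)\subseteq\Spec{(R)}$. Then $U$ is a basic open subset, nonempty because $abc\neq0$ in the integral domain $R$. For any $\p\in U$, none of $a,b,c$ lies in $\p$, so $\deg{[f]_\p}=\deg{f}$, $\deg{[g]_\p}=\deg{g}$ (and in particular $[g]_\p\neq0$), and reducing the B\'ezout identity modulo $\p$ yields $[u']_\p[f]_\p+[v']_\p[g]_\p=[c]_\p\neq0$, forcing $[f]_\p$ and $[g]_\p$ to be coprime in $[R]_\p[X]$. Hence $\deg{[\phi]_\p}=\max{\lp\lb\deg{[f]_\p},\deg{[g]_\p}\rb\rp}=d=\deg{\phi}$, so $U\subseteq\mathcal{R}$.

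The only real obstacle is bookkeeping around the degenerate cases in which one of $f$ or $g$ has degree strictly less than $d$ (or is a constant), where one must ensure both that $[g]_\p\neq0$ and that no unexpected drop in degree occurs after reduction; multiplying the two leading coefficients with the B\'ezout constant into the single witness $abc$ handles all these cases uniformly in one principal open set.
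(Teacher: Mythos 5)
Your argument is correct, and it pursues the same overall strategy as the paper --- exhibit a single nonzero element $\alpha\in R$ so that $D(\alpha)$ lands inside $\mathcal{R}$ --- but you choose a different witness. The paper's proof simply cites Silverman (\cite[Section~2.4]{Sil}) and takes $\alpha$ to be the resultant of $f$ and $g$ (more precisely, of their degree-$d$ homogenizations), which is a single determinant in the coefficients and characterizes good reduction exactly; you instead take $\alpha = abc$, the product of the two leading coefficients with a B\'ezout constant obtained by clearing denominators from $uf+vg=1$ in $k[X]$. Your version is more elementary and self-contained (it never invokes resultants or the projective normalization), at the cost of only giving a sufficient condition for good reduction rather than the clean if-and-only-if that the resultant affords; for this Fact, sufficiency is all that is needed, so both approaches deliver the conclusion. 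One small point worth flagging: including the leading coefficients $a,b$ in the witness is genuinely necessary --- coprimality of $[f]_\p$ and $[g]_\p$ alone does not prevent a degree drop --- and this is exactly the role that the homogenization plays implicitly in the resultant formulation, so the two proofs are guarding against the same failure mode by different bookkeeping.
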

\begin{proof}
This fact follows from~\cite[Section~2.4]{Sil}: for any $f,g\in R[X]$ such that $\phi=f/g$ and $f,g$ have no common factors in $k[X]$, if we let $\alpha\in R\setminus\lb0\rb$ be the resultant of $f$ and $g$, then any $\p\in\mathcal{P}_R\setminus\mathcal{R}$ must contain $\alpha$.
\end{proof}

\noindent The following fact was noted in the first paragraph of~\cite[Section~3]{JKMT}.

\begin{fact}\label{sepduh}
Suppose that $k$ is a field and $\phi\in k(X)$.
If $\phi^\prime(X)\neq0$, then for all $n\in\Z_{\geq1}$, the rational function $\phi^n(X)-t$ is separable over $k(t)$.
\end{fact}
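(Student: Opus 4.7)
The plan is to reduce the separability of $\phi^n(X)-t$ (in the sense introduced just above the statement) to the nonvanishing of the formal $X$-derivative $(\phi^n)'(X)$ in $k(X)$, and then to exploit the transcendence of $t$ over $k$. First, I would write $\phi^n=F_n/G_n$ with $F_n,G_n\in k[X]$ coprime (building this up inductively from $\phi=f/g$ in lowest terms). Then in $k(t)[X]$ we have
\[
\phi^n(X)-t=\frac{F_n(X)-tG_n(X)}{G_n(X)},
\]
and this fraction is already in lowest terms, because any common factor in $k(t)[X]$ of $F_n-tG_n$ and $G_n$ would also divide $F_n$, contradicting $\gcd(F_n,G_n)=1$ in $k[X]\subseteq k(t)[X]$. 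So it suffices to show the polynomial $F_n-tG_n\in k(t)[X]$ is squarefree.

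Next, I would suppose for contradiction that $\alpha\in\overline{k(t)}$ is a repeated root of $F_n-tG_n$. Then both $F_n(\alpha)=tG_n(\alpha)$ and $F_n'(\alpha)=tG_n'(\alpha)$; coprimality of $F_n,G_n$ in $k[X]$ forces $G_n(\alpha)\neq 0$, so $t=F_n(\alpha)/G_n(\alpha)$. Substituting eliminates $t$ and yields $(F_n'G_n-F_nG_n')(\alpha)=0$. Granted that $F_n'G_n-F_nG_n'$ is a nonzero element of $k[X]$, this forces $\alpha$ to be algebraic over $k$, whence $t=F_n(\alpha)/G_n(\alpha)$ is also algebraic over $k$, contradicting the transcendence of $t$ over $k$.

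The main remaining task is to verify that $(\phi^n)'(X)=(F_n'G_n-F_nG_n')/G_n^2$ is nonzero in $k(X)$, and this is where the hypothesis $\phi'(X)\neq 0$ enters. I would argue by induction on $n$ using the chain rule
\[
(\phi^n)'(X)=\phi'(\phi^{n-1}(X))\cdot(\phi^{n-1})'(X).
\]
The hypothesis implies $\phi$ is nonconstant, so each iterate $\phi^{n-1}$ is nonconstant (its degree is $(\deg\phi)^{n-1}\geq 1$), and substituting a nonconstant element of $k(X)$ into the nonzero rational function $\phi'$ yields another nonzero element of $k(X)$, since every nonconstant element of $k(X)$ is transcendental over $k$. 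Combined with the inductive hypothesis, this closes the argument. The only (mild) obstacle is to verify this composition step uniformly in positive characteristic; it works, since the argument rests solely on transcendence and not on any characteristic-zero properties of differentiation.
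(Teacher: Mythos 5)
Your argument is correct and complete. The paper does not actually supply a proof of this fact; it simply records that the claim appears in the first paragraph of Section~3 of~\cite{JKMT}, so there is no in-text proof to compare against. Your route is the natural one: write $\phi^n=F_n/G_n$ in lowest terms in $k[X]$, note that $\lp F_n-tG_n\rp/G_n$ is still in lowest terms over $k(t)$ (since $\gcd_{k[X]}{(F_n,G_n)}=1$ persists in $k(t)[X]$), and show that a hypothetical repeated root $\alpha\in\overline{k(t)}$ of $F_n-tG_n$ would force $t=F_n(\alpha)/G_n(\alpha)$ and $\lp F_n'G_n-F_nG_n'\rp(\alpha)=0$, so that $\alpha$, and hence $t$, would be algebraic over $k$. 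The chain-rule induction establishing $(\phi^n)'\neq0$ (equivalently $F_n'G_n-F_nG_n'\neq0$) is also sound, and you are right that it relies only on the transcendence of a nonconstant element of $k(X)$ over $k$, so it works uniformly in positive characteristic. One small wording remark: when $\Char{(k)}>0$ the field $k(t)$ is imperfect, so ``squarefree'' is \emph{a priori} weaker than ``separable''---a squarefree polynomial can still have repeated roots in $\overline{k(t)}$ if one of its irreducible factors is inseparable. What you need, and what your argument actually proves by supposing a repeated root, is that $F_n-tG_n$ is separable; the substance of the proof is unaffected, but the phrase ``it suffices to show $F_n-tG_n$ is squarefree'' should read ``separable.''
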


With these facts in hand, we prove the following corollary of \cref{reduction}.

\begin{corollary}\label{onelevel}
Suppose that $R$ is a Noetherian integral domain with field of fractions $k$, that $\phi\in k(X)$, and that $n\in\Z_{\geq1}$.
Let
\begin{itemize}
\item
$A=R[t]$,
\item
$K=\Frac{A}=k(t)$,
\item
$L$ be the splitting field of $\phi^n(X)-t$ over $K$, and
\item
$B$ be the integral closure of $A$ in $L$.
\end{itemize}
If
\begin{enumerate}
\item\label{sephyp}
$\phi^\prime(X)\neq0$ and
\item\label{galhyp}
$k$ is algebraically closed in $L$,
\end{enumerate}
then
the subset of $\Spec{(R)}$ consisting of those primes $\p$ such that
\begin{itemize}
\item
$\p B$ is prime,
\item
$[B]_{\p B}/[A]_{\p A}$ is Galois,
\item
$\rho_{\p B\vert\p A}$ is an isomorphism, and
\item
$[R]_\p$ is algebraically closed in $[B]_{\p B}$
\end{itemize}
contains a nonempty open subset of $\Spec{(R)}$.
\end{corollary}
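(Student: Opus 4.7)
The plan is to apply \cref{reduction} to the triple $(R, A, L)$ exactly as set up in the corollary, and then to combine its output with \cref{prereduction} so that all four listed conditions hold simultaneously on the resulting open set.

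First I would verify the hypotheses of \cref{reduction}. The ring $A = R[t]$ is manifestly a finitely-generated $R$-algebra, and its integral-closedness, along with the Noetherian property, follows from those of $R$ (upon localizing if necessary to put ourselves in the setting where $A$ is integrally closed). To see that $L/K$ is a finite Galois extension, I invoke hypothesis (\ref{sephyp}): since $\phi'(X) \neq 0$, \cref{sepduh} ensures that $\phi^n(X) - t$ is separable over $K = k(t)$, so its splitting field $L$ is a finite Galois extension of $K$. Hypothesis (\ref{galhyp}) supplies the first bullet of \cref{reduction} verbatim, and $K = k(t)$ is separable over $k$ because $t$ is transcendental. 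The separability of $L/K$ then produces (after clearing denominators) an integral primitive element whose minimal polynomial $f \in A[X]$ is the $f$ required by \cref{reduction}.

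With these hypotheses verified, \cref{reduction} yields two nonempty open subsets of $\Spec(R)$: one subset $U_1$ on which $\p B$ is prime and $[R]_\p$ is algebraically closed in $[B]_{\p B}$, and another subset $U_2$ on which $[f]_{\p A}$ is irreducible and separable. Because $\Spec(R)$ is irreducible (as $R$ is an integral domain), the intersection $U_1 \cap U_2$ is again a nonempty open subset.

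Finally, on $U_1 \cap U_2$ the primality of $\p B$ together with the separability of $[f]_{\p A}$ are precisely the hypotheses of \cref{prereduction}, which delivers the remaining three conclusions at once: $[B]_{\p B}/[A]_{\p A}$ is Galois, the decomposition group $D_{L,A}(\p B \vert \p A)$ equals $\Gal(L/K)$, and $\rho_{\p B \vert \p A}$ is an isomorphism. Combined with the algebraic-closure condition already guaranteed by $U_1$, this gives all four properties listed in the corollary. I do not expect a substantive obstacle; the proof is essentially a bookkeeping exercise that plugs the separability input from \cref{sepduh} into \cref{reduction} and then routes the output through \cref{prereduction}.
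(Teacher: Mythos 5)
Your proof is correct and follows the same route as the paper: verify the hypotheses of \cref{reduction} via \cref{sepduh} and the separability of $k(t)/k$, apply \cref{reduction}, intersect the two resulting nonempty open sets (using irreducibility of $\Spec(R)$), and feed the output into \cref{prereduction} to obtain the Galois conclusions. The one deviation is your parenthetical about ``localizing if necessary'' to make $A=R[t]$ integrally closed: that remark is vague and not obviously justified for an arbitrary Noetherian domain (the locus where $R_\p$ is normal need not be open in general), but the paper's own proof is equally silent on this point --- it tacitly uses that $R[t]$ is integrally closed, which is equivalent to $R$ being integrally closed, a condition not listed in the corollary's hypotheses but which holds in every downstream application (where $R$ is Dedekind).
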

\begin{proof}
Thanks to \cref{sepduh}, hypothesis~\hyperref[sephyp]{(\ref*{sephyp})} implies that rational function $\phi^n(X)-t$ is separable over $K$, so we see that $L$ is separable (hence Galois) over $K$.
Let $f\in A[X]$ be a minimal polynomial of a primitive integral element of the extension $L/K$.
Since $K$ is separable over $k$ by construction, hypothesis \hyperref[galhyp]{(\ref*{galhyp})} ensures that the hypotheses of \cref{reduction} are satisfied, so 
\[
\lb\p\in\mathcal{P}_R\mid
\p B\text{ is prime, }
[f]_{\p A}\text{ is separable, and }
[R]_\p\text{ is algebraically closed in }[B]_{\p B}\rb
\]
contains a nonempty open subset of $\Spec{(R)}$.
The result now follows from \cref{prereduction}.
%
\end{proof}

Before stating the next theorem, we introduce some notation for group actions.
If $\rho$ is an action of a finite group $G$ on a set $S$, we will write
\begin{itemize}
\item
$\Fix{(\rho)}\colonequals\lb\sigma\in G\mid\text{there exists }s\in S\text{ such that }\rho(\sigma)(s)=s\rb$ and
\item
$\fix{(\rho)}\colonequals\lv\Fix{(\rho)}\rv/\lv G\rv$.
\end{itemize}
To extend this notation to Galois actions, for any field $K$ and separable $\psi\in K(X)$, let $L$ be a splitting field of $\psi$ over $K$ and $\rho$ be the action of $\Gal{\lp L/K\rp}$ on the roots of $\psi$ in $L$; we will write $\fix_K{(\psi)}$ for $\fix{(\rho)}$, since this quantity depends only on $K$ and $\psi$.

Versions of the following result have appeared many times, see the proof of Proposition~5.3 in~\cite{JKMT}, the proof of Proposition~3.3 in~\cite{JuulFPP}, and~\cite[Theorem~2.1]{JuulP}.
The theorem below is a special case of \cite[Theorem~2.1]{JuulP} and is based on effective version of the Chebotarev Density Theorem; namely, Proposition~6.4.8 of~\cite{FJ}.

\begin{named}{Effective Image Size Theorem}\label{effCheb}
Suppose that $q$ is a prime power, that $\phi\in\F_q(X)$, and that $n\in\Z_{\geq1}$.
Suppose that $\phi^\prime(X)\neq0$ and let
\begin{itemize}
\item
$d=\deg{\phi}$,
\item
$L$ be a splitting field of $\phi^n(X)-t$ over $\F_q(t)$, and
\item
$G=\Gal{\lp L/\F_q(t)\rp}$.
\end{itemize}
If
\begin{itemize}
\item
$L/\F_q(t)$ is tamely ramified, and
\item
$\F_q$ is algebraically closed in $L$,
\end{itemize}
then
\[
\lv\frac{\lv\phi^n\lp\P^1\lp\F_q\rp\rp\rv}{\fix_{\F_q(t)}{\lp\phi^n(X)-t\rp}}-\lv\P^1\lp\F_q\rp\rv\rv
<\frac{7nd\lv G\rv}{q^{1/2}}.
\]
\end{named}
To apply the \ref{effCheb} to the the specializations considered in \cref{reduction}, we must ensure that these specializations are finite fields---that is, we turn our considerations to residually finite Dedekind domains.
%
The following theorem generalizes \cite[Proposition~5.3]{JKMT} both in its effectivity and in that it holds in positive characteristic.

\begin{theorem}\label{onenn}
Suppose that $R$ is a residually finite Dedekind domain with field of fractions $k$, that $\phi\in k(X)$, and that $n\in\Z_{\geq1}$.
Write
\begin{itemize}
\item
$A$ for $R[t]$,
\item
$K$ for $\Frac{A}=k(t)$, and
\item
$d$ for $\deg{\phi}$.
\end{itemize}
Suppose that
\begin{enumerate}
\item\label{gottasep}
$\phi^\prime(X)\neq0$,
\end{enumerate}
so that $\phi^n(X)-t$ is separable over $K$ by \cref{sepduh}, and let
\begin{itemize}
\item
$L$ be the splitting field of $\phi^n(X)-t$ over $K$ and
\item
$G$ be the Galois group of $L/K$.
\end{itemize}
If
\begin{enumerate}
\setcounter{enumi}{1}
\item\label{gottabeclosed}
$k$ is algebraically closed in $L$, and
\item\label{gottatame}
either $\Char{(k)}=0$ or $\gcd{\lp\Char{(k)},\lv G\rv\rp}=1$,
\end{enumerate}
then there exists $N_{R,\phi,n}\in\Z_{\geq1}$ such that for all $\p\in\mathcal{P}_R$ with $\norm{(\p)}>N_{R,\phi,n}$,
\[
\lv\frac{\lv[\phi]_\p^n\lp\P^1\lp[R]_\p\rp\rp\rv}
{\lv\P^1\lp[R]_\p\rp\rv}-\fix_K{\lp\phi^n(X)-t\rp}\rv
<\frac{7nd\lv G\rv}{\norm{(\p)}^{3/2}}\cdot\fix_K{\lp\phi^n(X)-t\rp}.
\]
\end{theorem}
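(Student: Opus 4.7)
The plan is to reduce \cref{onenn} to a pointwise application of the \ref{effCheb} at each $\p$ in a suitable cofinite subset $\mathcal{U}\subseteq\mathcal{P}_R$, identify $\fix_{[R]_\p(t)}\lp[\phi]_\p^n(X)-t\rp$ with $\fix_K\lp\phi^n(X)-t\rp$ via the specialization machinery of \cref{specialization}, and then rearrange the resulting inequality to the desired form.

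To construct $\mathcal{U}$, I would successively exclude finitely many primes of $\mathcal{P}_R$ to guarantee the following properties at each $\p\in\mathcal{U}$. First, $\phi$ has good reduction at $\p$, so $\deg{[\phi]_\p}=d$; this excludes only finitely many primes by \cref{plentygood}. Second, after clearing denominators to write $\phi=f/g$ with $f,g\in R[X]$ (coprime in $k[X]$), set $h=f^\prime g-fg^\prime\in R[X]$; hypothesis~\hyperref[gottasep]{(\ref*{gottasep})} forces $h\neq 0$, and since only finitely many primes of a Dedekind domain can contain a fixed nonzero element of $R$, excluding these ensures $[\phi]_\p^\prime(X)\neq 0$. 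Third, hypotheses~\hyperref[gottasep]{(\ref*{gottasep})} and~\hyperref[gottabeclosed]{(\ref*{gottabeclosed})} imply those of \cref{onelevel}, so I would exclude finitely more primes to obtain that $\p B$ is prime, $[B]_{\p B}/[A]_{\p A}$ is Galois with $\rho_{\p B\vert\p A}$ an isomorphism $G\to\Gal{\lp[B]_{\p B}/[A]_{\p A}\rp}$, and $[R]_\p$ is algebraically closed in $[B]_{\p B}$. Finally, if $\Char{(k)}=0$, I would exclude the finitely many primes whose residue characteristic divides $\lv G\rv$; combined with hypothesis~\hyperref[gottatame]{(\ref*{gottatame})}, this guarantees $\gcd{\lp\Char{\lp[R]_\p\rp},\lv G\rv\rp}=1$ for every $\p\in\mathcal{U}$.

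For $\p\in\mathcal{U}$, set $q=\norm{(\p)}$, so $[R]_\p=\F_q$. By \cref{sepduh} applied to $[\phi]_\p$, the polynomial $[\phi]_\p^n(X)-t$ is separable over $\F_q(t)$; good reduction together with the conclusions of \cref{onelevel} identify its splitting field over $\F_q(t)$ as $[B]_{\p B}$, with Galois group $\Gal{\lp[B]_{\p B}/[A]_{\p A}\rp}\simeq G$ via $\rho_{\p B\vert\p A}$. Since every ramification index of a Galois extension divides the group order, the fourth exclusion above yields tame ramification of $[B]_{\p B}/\F_q(t)$. The last bullet of \cref{prereduction} intertwines the $G$-action on the roots of $\phi^n(X)-t$ (viewed in $B$ after routine clearing of integrality) with the $G$-action on the roots of $[\phi]_\p^n(X)-t$ in $[B]_{\p B}$; since both polynomials are separable and share the same degree by good reduction, reduction modulo $\p B$ is a $G$-equivariant bijection of root sets, whence $\fix_{\F_q(t)}\lp[\phi]_\p^n(X)-t\rp=\fix_K\lp\phi^n(X)-t\rp$. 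Applying the \ref{effCheb} to $[\phi]_\p$, dividing its conclusion by $q+1=\lv\P^1\lp\F_q\rp\rv$, multiplying by $\fix_K\lp\phi^n(X)-t\rp$, and using $q^{1/2}(q+1)>q^{3/2}$ then yields exactly the claimed bound. Taking $N_{R,\phi,n}$ to be larger than the norm of every excluded prime finishes the argument.

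The main obstacle, I expect, is the transfer of the Galois action on roots from $L$ down to $[B]_{\p B}$: \cref{onelevel} yields only an abstract isomorphism of Galois groups, whereas the fix-value equality requires an honest $\rho_{\p B\vert\p A}$-equivariant bijection between the two root sets. Securing this bijection demands combining good reduction (to preserve root counts), the separability of both polynomials (from hypothesis~\hyperref[gottasep]{(\ref*{gottasep})} and the second exclusion), and the explicit compatibility between Galois actions recorded in the last bullet of \cref{prereduction}.
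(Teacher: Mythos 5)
Your proposal is correct and follows essentially the same route as the paper: apply \cref{onelevel} together with \cref{plentygood} to obtain a cofinite set of primes where the Galois group transfers faithfully, use \cref{prereduction} to identify $\fix_K{\lp\phi^n(X)-t\rp}$ with the corresponding fixed-point proportion over the residue field, handle tameness via the gcd condition (excluding residue characteristics dividing $\lv G\rv$ when $\Char{(k)}=0$), and then invoke the \ref{effCheb} with the same algebraic rearrangement using $q^{1/2}(q+1)>q^{3/2}$. The only cosmetic difference is that you secure $[\phi]_\p^\prime(X)\neq 0$ explicitly by discarding the finitely many primes containing the Wronskian $f^\prime g - f g^\prime$, whereas the paper gets separability of $[\phi]_{\p A}^n(X)-[t]_{\p A}$ (and hence the nonvanishing of $[\phi]_\p^\prime$) as a consequence of $[B]_{\p B}/[A]_{\p A}$ being Galois.
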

\begin{proof}
Write $B$ for the integral closure of $A$ in $L$.
Thanks to hypotheses~\hyperref[gottasep]{(\ref*{gottasep})} and~\hyperref[gottabeclosed]{(\ref*{gottabeclosed})}, the hypotheses of \cref{onelevel} satisfied.
Thus, since $R$ is a Dedekind domain, \cref{onelevel} and \cref{plentygood} imply that there exists $N_0\in\Z_{\geq1}$ such that for all $\p\in\mathcal{P}_R$ with $\norm{(\p)}>N_0$,
\begin{itemize}
\item
$\p B$ is prime, the extension $[B]_{\p B}/[A]_{\p A}$ is Galois, and $\rho_{\p B\vert\p A}$ is an isomorphism,
\item
$[R]_\p$ is algebraically closed in $\lc L\rc_{\p B}$, and
\item
$\phi$ has good reduction at $\p$.
\end{itemize}
For any such prime $\p$, since $[B]_{\p B}/[A]_{\p A}$ is the Galois splitting field of the irreducible polynomial $[\phi]_{\p A}^n\lp X\rp-[t]_{\p A}$, we know $[\phi]_{\p A}^n\lp X\rp-[t]_{\p A}$ is separable.
As $\p B$ is prime, we know $D_{L,A}\lp\p B\vert\p A\rp=\Gal{(L/K)}$, so the fact that $\rho_{\p B\vert\p A}$ is an isomorphism implies that the action of $\Gal{(L/K)}$ on the roots of $\phi^n(X)-t$ is isomorphic to the action of $\Gal{\lp[B]_{\p B}/[A]_{\p A}\rp}$ on the roots of $[\phi]_{\p A}^n\lp X\rp-[t]_{\p A}$.
In particular, for all such primes $\p$,
\[
\fix_K{\lp\phi^n\lp X\rp-t\rp}=\fix_{[A]_{\p A}}{\lp[\phi]_{\p A}^n\lp X\rp-[t]_{\p A}\rp}.
\]

Now, if $\Char{(k)}=0$, the fact that $R$ is a Dedekind domain ensures only finitely many $\p\in\mathcal{P}_R$ contain any divisor of $\lv G\rv$; thus, we may choose an integer $N\geq N_0$ such that if $\p\in\mathcal{P}_R$ and $\norm{(\p)}>N$, then $\gcd{\lp\Char{\lp[R]_\p\rp},\lv G\rv\rp}=1$.
If, on the other hand, we are in the case $\Char{(k)}>0$, we set $N=N_0$.
In either case, we know that for all $\p\in\mathcal{P}_R$ with $\norm{(\p)}>N$, the extension $\lc B\rc_{\p B}/[A]_{\p A}$ is tamely ramified (by choice of $N$ if $\Char{(k)}=0$ and by hypothesis \hyperref[gottatame]{(\ref*{gottatame})} if $\Char{(k)}>0$).
For all such $\p$, we know $d=\deg{\lp[\phi]_\p\rp}$ since $\phi$ has good reduction at $\p$; the conclusion now follows from the \ref{effCheb}.
\end{proof}

Thanks to \cref{onenn}, we obtain \cref{finalgeneral} below, showing the scarcity of periodic points in specializations of dynamical systems of rational functions over residually finite Dedekind domains.
(The characteristic zero case of \cref{finalgeneral} is Corollary~5.4 of ~\cite{JKMT}.)

\begin{corollary}\label{finalgeneral}
Suppose that $R$ is a residually finite Dedekind domain with field of fractions $k$ and that $\phi\in k(X)$.
Write
\begin{itemize}
\item
$A$ for $R[t]$ and
\item
$K$ for $\Frac{A}=k(t)$.
\end{itemize}
Suppose that
\begin{enumerate}
\item\label{gottasepgen}
$\phi^\prime(X)\neq0$,
\end{enumerate}
and for every $m\in\Z_{\geq1}$, let
\begin{itemize}
\item
$L_m$ be the splitting field of $\phi^m(X)-t$ over $K$ and
\item
$G_m$ be the Galois group of $L_m/K$.
\end{itemize}
If
\begin{enumerate}
\setcounter{enumi}{1}
\item\label{gottabeclosedgen}
for all $m\in\Z_{\geq1}$, the field $k$ is algebraically closed in $L_m$,
\item\label{gottatamegen}
either $\Char{(k)}=0$ or for all $m\in\Z_{\geq1}$, the integers $\Char{(k)}$ and $\lv G_m\rv$ are coprime, and
\item\label{gottanotfix}
$\lim_{m\to\infty}{\fix_K{\lp\phi^m\lp X\rp-t\rp}}=0$,
\end{enumerate}
then
\[
\lim_{\substack{\p\in\mathcal{P}_R\\
\norm{(\p)}\to\infty}}
{\frac{\lv\Per{\lp\P^1\lp[R]_\p\rp,[\phi]_\p\rp}\rv}
{\lv\P^1\lp[R]_\p\rp\rv}}=0.
\]
\end{corollary}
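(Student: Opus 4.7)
The plan is to exploit the elementary observation that every periodic point of $[\phi]_\p$ lies in the image of every iterate: if $[\phi]_\p^n(\alpha)=\alpha$, then $\alpha=[\phi]_\p^{mn}(\alpha)\in [\phi]_\p^m\lp\P^1\lp[R]_\p\rp\rp$ for each $m\in\Z_{\geq1}$. Therefore
\[
\frac{\lv\Per{\lp\P^1\lp[R]_\p\rp,[\phi]_\p\rp}\rv}{\lv\P^1\lp[R]_\p\rp\rv}
\leq
\frac{\lv[\phi]_\p^m\lp\P^1\lp[R]_\p\rp\rp\rv}{\lv\P^1\lp[R]_\p\rp\rv}
\]
for every $m\in\Z_{\geq1}$ and every $\p\in\mathcal{P}_R$ at which $[\phi]_\p$ is defined. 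So it suffices to make the right-hand side small by first choosing $m$ large and then $\norm{(\p)}$ large.

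To carry this out, fix $\varepsilon>0$. By hypothesis \hyperref[gottanotfix]{(\ref*{gottanotfix})}, choose $m\in\Z_{\geq1}$ with $\fix_K{\lp\phi^m(X)-t\rp}<\varepsilon/2$. Hypotheses \hyperref[gottasepgen]{(\ref*{gottasepgen})}, \hyperref[gottabeclosedgen]{(\ref*{gottabeclosedgen})}, and \hyperref[gottatamegen]{(\ref*{gottatamegen})} of the present corollary are exactly hypotheses \hyperref[gottasep]{(\ref*{gottasep})}, \hyperref[gottabeclosed]{(\ref*{gottabeclosed})}, and \hyperref[gottatame]{(\ref*{gottatame})} of \cref{onenn} applied to this fixed $m$. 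Setting $d=\deg{\phi}$, \cref{onenn} therefore produces $N_{R,\phi,m}\in\Z_{\geq1}$ such that for all $\p\in\mathcal{P}_R$ with $\norm{(\p)}>N_{R,\phi,m}$,
\[
\frac{\lv[\phi]_\p^m\lp\P^1\lp[R]_\p\rp\rp\rv}{\lv\P^1\lp[R]_\p\rp\rv}
<\fix_K{\lp\phi^m(X)-t\rp}\cdot\lp1+\frac{7md\lv G_m\rv}{\norm{(\p)}^{3/2}}\rp.
\]

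Now choose $N\geq N_{R,\phi,m}$ large enough that $7md\lv G_m\rv/N^{3/2}\leq1$, which is possible since $m$, $d$, and $\lv G_m\rv$ are fixed. Combining the two displayed inequalities, for every $\p\in\mathcal{P}_R$ with $\norm{(\p)}>N$ we obtain
\[
\frac{\lv\Per{\lp\P^1\lp[R]_\p\rp,[\phi]_\p\rp}\rv}{\lv\P^1\lp[R]_\p\rp\rv}
<2\fix_K{\lp\phi^m(X)-t\rp}<\varepsilon.
\]
Since $\varepsilon$ was arbitrary, the limit is zero.

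There is no real obstacle here; the substantive content has already been isolated in \cref{onenn}, and the only genuine choice in this proof is the order of quantifiers, namely picking $m$ before $\p$ so that the linear-in-$m$ error term from the Effective Image Size Theorem is dominated by the prime-power term in the denominator.
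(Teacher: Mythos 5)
Your proof is correct and follows essentially the same route as the paper's: fix $\varepsilon$, choose an iterate $m$ with $\fix_K(\phi^m(X)-t)<\varepsilon/2$ via hypothesis~\hyperref[gottanotfix]{(\ref*{gottanotfix})}, invoke \cref{onenn} for that fixed $m$, absorb the error term by taking $\norm{(\p)}$ large, and use $\Per\subseteq[\phi]_\p^m(\P^1([R]_\p))$. The only cosmetic difference is your slightly cleaner choice of $N$ (forcing the error factor $\leq1$ so the bound is $2\fix_K<\varepsilon$, rather than the paper's $N\geq(14nd\lv G_n\rv/\epsilon)^{2/3}$).
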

\begin{proof}
Let $\epsilon>0$.
By hypothesis~\hyperref[gottanotfix]{(\ref*{gottanotfix})}, there exists $n\in\Z_{\geq0}$ such that
\[
\fix_K{\lp\phi^n\lp X\rp-t\rp}<\epsilon/2.
\]
Hypotheses~\hyperref[gottasepgen]{(\ref*{gottasepgen})},~\hyperref[gottabeclosedgen]{(\ref*{gottabeclosedgen})}, and~\hyperref[gottatamegen]{(\ref*{gottatamegen})} allow us to apply \cref{onenn} to find an integer $N_0\in\Z_{\geq1}$ with the property that for all $\p\in\mathcal{P}_R$ with $\norm{(\p)}>N_0$,
\[
\frac{\lv[\phi]_\p^n\lp\P^1\lp[R]_\p\rp\rp\rv}
{\lv\P^1\lp[R]_\p\rp\rv}
<\fix_K{\lp\phi^n(X)-t\rp}
+\frac{7nd\lv G_n\rv}{\norm{(\p)}^{3/2}}\cdot\fix_K{\lp\phi^n(X)-t\rp}.
\]
The result follows by setting $N=\max{\lp\lb N_0,\lp\frac{14nd\lv G_n\rv}{\epsilon}\rp^{\frac{2}{3}}\rb\rp}$ and noting
\[
\Per{\lp\P^1\lp[R]_\p\rp,[\phi]_\p\rp}
\subseteq[\phi]_\p^n\lp\P^1\lp[R]_\p\rp\rp.
\]
\end{proof}

\section{Global fields and heights}
\label{heights}

In \cref{imagesizetwo}, we make \cref{reduction} more effective in the special case where the critical points of the specified rational function do not collide.
As the proof of \cref{wedidit} relies on the theory of heights, we begin this section by recalling basic facts on global fields, following~\cite{AW}.

If $k$ is a field, we will write $\mathcal{M}_k$ for the set of places of $k$ and we say $k$ \emph{satisfies the product formula} if for every $\vv\in\mathcal{M}_k$ there exists an absolute value $\lv\,\cdot\,\rv_\vv\in\vv$ such that for every $\alpha\in k\setminus\lb0\rb$,
\begin{itemize}
\item
$\lv\lb\vv\in\mathcal{M}_k\mid1\neq\lv\alpha\rv_\vv\rb\rv<\infty$ and
\item
$\prod_{\vv\in\mathcal{M}_k}{\lv\alpha\rv_\vv}=1$.
\end{itemize}
A field $k$ is a \emph{global field} if
\begin{itemize}
\item
$k$ satisfies the product formula and
\item
for all $\vv\in\mathcal{M}_k$, either
\begin{itemize}
\item
$\vv$ is archimedean or
\item
$\vv$ is nonarchimedean and discrete, with a finite residue field.
\end{itemize}
\end{itemize}

For any global field $k$, we will write $\mathcal{P}_k$ for the nonarchimedean places of $k$.
Additionally, if $\vv\in\mathcal{P}_k$, then we will write
\begin{itemize}
\item
$[k]_\vv$ for the residue field of $k$ at $\vv$,
\item
$\norm{(\vv)}$ for $\lv[k]_\vv\rv$, and
\item
$v_\vv\colon k^\times\to \Z$ for the associated normalized valuation on $k$.
\end{itemize}
For such a place $\vv$, we define the function
\begin{align*}
\lV\,\cdot\,\rV_\vv\colon k&\to\R_{\geq0}\\
\alpha&\mapsto\begin{cases}
\norm{(\vv)}^{-v_\vv(\alpha)}&\text{if }\alpha\neq0\\
0&\text{if }\alpha=0.
\end{cases}
\end{align*}
Moreover, if $\alpha\in k$ and $\lV\alpha\rV_\vv\leq1$, we will write $[\alpha]_\vv$ for the image of $\alpha$ in $[k]_\vv$; if $[\alpha:\beta]\in\P^1(k)$, we will write
\[
[\alpha:\beta]_\vv\colonequals
\begin{cases}
\lc[\alpha/\beta]_\vv:1\rc
&\text{if }\beta\neq0\text{ and }\lV\alpha/\beta\rV_\vv\leq1\\
\lc1:[\beta/\alpha]_\vv\rc
&\text{if }\alpha\neq0\text{ and }\lV\beta/\alpha\rV_\vv\leq1.
\end{cases}
\]

On the other hand, If $\vv$ is an archimedean place of a global field $k$, then we know by by~\cite{O} that the completion of $k$ with respect to $\vv$ is isometric to either $\R$ or $\C$; in the former situation, we say that $\vv$ is a real place and write $\lV\,\cdot\,\rV_\vv\colon k\to\R_{\geq0}$ for the restriction of the Euclidean absolute value on $\R$ to $k$ and in the latter, we say $\vv$ is a complex place and write $\lV\,\cdot\,\rV_\vv\colon k\to\R_{\geq0}$ for the restriction of the square of the Euclidean absolute value on $\C$ to $k$.
The number of archimedean places of a global field is finite (see, for example, \cite[page 473]{AW}); if there are $r$ real places and $s$ imaginary places, we define $\ar{(k)}\colonequals r+2s$.

\begin{fact}[See Theorem~3 of~\cite{AW}]\label{ArtinWhaplesFact}
If $k$ is a global field, then for all $\alpha\in k\setminus\lb0\rb$,
\[
\prod_{\vv\in\mathcal{M}_k}{\lV\alpha\rV_\vv}=1.
\]
\end{fact}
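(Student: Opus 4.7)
The plan is to use the fact (also due to Artin and Whaples) that every global field $k$ is a finite separable extension of a prime global field $k_0$, where $k_0=\Q$ or $k_0=\F_p(T)$ for some prime $p$, and to reduce the product formula on $k$ to the product formula on $k_0$ via the local norm.

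First, I would verify the formula by hand in the two base cases. For $\alpha=\pm\prod_p p^{a_p}\in\Q^\times$, the normalizations in the preceding text give $\lV\alpha\rV_\infty=\prod_p p^{a_p}$ and $\lV\alpha\rV_p=p^{-a_p}$ for each prime $p$, and the product over all places of $\Q$ telescopes to $1$. For $k_0=\F_p(T)$, the nonarchimedean places correspond to the monic irreducibles $Q\in\F_p[T]$ (with $\norm{(Q)}=p^{\deg Q}$) together with the place at infinity (where $v_\infty(f/g)=\deg g-\deg f$ and $\norm{(\infty)}=p$); unique factorization in $\F_p[T]$ yields the analogous telescoping on $\F_p(T)^\times$.

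Next, I would establish the local compatibility: for each place $\vv$ of $k$ lying over a place $\vv_0$ of $k_0$ and each $\alpha\in k^\times$,
\[
\lV\alpha\rV_\vv=\lV N_{k_\vv/(k_0)_{\vv_0}}(\alpha)\rV_{\vv_0}.
\]
In the nonarchimedean case this is the identity $v_{\vv_0}\lp N_{k_\vv/(k_0)_{\vv_0}}(\alpha)\rp=f(\vv/\vv_0)\cdot v_\vv(\alpha)$ combined with $\norm{(\vv)}=\norm{(\vv_0)}^{f(\vv/\vv_0)}$. In the archimedean case it reflects precisely the choice (made in the preceding definitions) of the square of the Euclidean absolute value at complex places, since $N_{\C/\R}(z)=z\bar z=\lv z\rv^2$. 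Using the tensor-product decomposition $k\otimes_{k_0}(k_0)_{\vv_0}\cong\prod_{\vv\mid\vv_0}k_\vv$ (valid because $k/k_0$ is separable), together with the resulting factorization $N_{k/k_0}(\alpha)=\prod_{\vv\mid\vv_0}N_{k_\vv/(k_0)_{\vv_0}}(\alpha)$, one obtains $\prod_{\vv\mid\vv_0}\lV\alpha\rV_\vv=\lV N_{k/k_0}(\alpha)\rV_{\vv_0}$.

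Combining the two pieces and applying the base case to $N_{k/k_0}(\alpha)\in k_0^\times$ then gives
\[
\prod_{\vv\in\mathcal{M}_k}\lV\alpha\rV_\vv
=\prod_{\vv_0\in\mathcal{M}_{k_0}}\prod_{\vv\mid\vv_0}\lV\alpha\rV_\vv
=\prod_{\vv_0\in\mathcal{M}_{k_0}}\lV N_{k/k_0}(\alpha)\rV_{\vv_0}
=1.
\]
The main obstacle is the local identity in the previous paragraph: one must carefully reconcile the specific normalizations used in the paper (residue-field-size at nonarchimedean places, squared Euclidean at complex places) with the local norm, tracking the ramification index, residue degree, and local degree so that the two descriptions agree on the nose.
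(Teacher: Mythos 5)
The paper does not prove this Fact; it is cited directly from Theorem~3 of Artin--Whaples, so there is no in-paper argument to compare against. Your proposed proof is correct and is the standard one: verify the product formula by hand on the prime global field $k_0\in\lb\Q,\F_p(T)\rb$ via unique factorization, establish the local identity $\lV\alpha\rV_\vv=\lV N_{k_\vv/(k_0)_{\vv_0}}(\alpha)\rV_{\vv_0}$ (which is exactly why the paper squares the absolute value at complex places and uses residue-field cardinality at nonarchimedean ones), and globalize via $k\otimes_{k_0}(k_0)_{\vv_0}\cong\prod_{\vv\mid\vv_0}k_\vv$. Two small points worth flagging. First, for the function-field case the opening assertion that $k$ is a finite \emph{separable} extension of some $\F_p(T)$ is not automatic; it requires choosing a separating transcendence element, which exists because $k$ is finitely generated of transcendence degree one over the perfect field $\F_p$. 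Second, the nonarchimedean local identity rests on $v_{\vv_0}\lp N_{k_\vv/(k_0)_{\vv_0}}(\alpha)\rp=f(\vv/\vv_0)\,v_\vv(\alpha)$ together with $\norm{(\vv)}=\norm{(\vv_0)}^{f(\vv/\vv_0)}$; you state this correctly, and it is precisely the computation that makes the paper's residue-field normalization the right one. Note also that Artin and Whaples' own development is axiomatic (they characterize global fields by the product formula plus a finiteness axiom and derive the classification), so your argument is closer in spirit to the textbook reduction-to-prime-field proof than to the structure of the cited source, but both yield the same statement.
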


Turning to heights, for any global field $k$, we define a height function $H_k$ on $k$ by setting
\begin{align*}
H_k\colon k&\to\R_{\geq1}\\
\alpha&\mapsto\prod_{\vv\in\mathcal{M}_k}{\max{\lb1,\lV\alpha\rV_\vv\rb}}.
\end{align*}
We extend this definition to projective space by defining
\begin{align*}
H_k\colon\P^1(k)&\to\R_{\geq1}\\
[\alpha:\beta]&\mapsto
\begin{cases}
H_k(\alpha/\beta)
&\text{if }\beta\neq0\\
H_k(\beta/\alpha)&\text{if }\alpha\neq0.
\end{cases}
\end{align*}
\noindent We now introduce the quantities needed to make \cref{wedidit} explicit.

\begin{definition}\label{exdef}
Suppose that $k$ is a global field.
If $f\in k[X]$, say with $f(X)=\alpha_nX^n+\cdots+\alpha_0$ for some $\alpha_n,\ldots,\alpha_0\in k$, then for any $\vv\in\mathcal{M}_k$ we will write
\[
\lV f\rV_\vv
\colonequals\max{\lp\lb\lV a_n\rV_\vv,\ldots,\lV a_0\rV_\vv\rb\rp}.
\]
If $g\in k[X]$ as well, we write
\[
\lV f,g\rV_\vv\colonequals\max{\lp\lb\lV f\rV_\vv,\lV g\rV_\vv\rb\rp}
\]
and
\[
H_k(f,g)\colonequals\prod_{\vv\in\mathcal{M}_k}{\lV f,g\rV_\vv}.
\]
If $g\neq0$, we write
\[
b_{k,f,g}\colonequals\max{\lp\lb2,(\deg{(f/g)}+1)^{\ar{(k)}}H_k(f,g)\rb\rp}.
\]
Furthermore, if $C$ is a finite subset of $\P^1(k)$, we write
\[
c_{k,f,g,C}\colonequals b_{k,f,g}\cdot\max{\lp\lb H_k(\gamma)\mid\gamma\in C\rb\rp}.
\]
Finally, for any $\epsilon\in\R_{>0}$, define the function
\[
n_{k,f,g,C,\epsilon}\colon\lb\vv\in\mathcal{P}_k\mid\norm{(\vv)}>2^{\ar{(k)}}\rb\to\Z
\]
by setting
\[
n_{k,f,g,C,\epsilon}(\vv)\colonequals
\left\lfloor
\frac{\log{(\log{\lp\norm{(\vv)}\rp}-\ar{(k)}\log2)}
-\log{\max{\lp\lb2\log{\lp c_{k,f,g,C}\rp},
\frac{4\log{(d!)}}{\epsilon}\rb\rp}}}
{2\log{\lp\deg{(f/g)}\rp}}
\right\rfloor.
\]
\end{definition}

With these quantities in hand, we now recall two facts on heights that we will need to prove \cref{wedidit}.

\begin{fact}\label{bomb}
If $k$ is a global field and $\alpha,\beta\in k$, then
\[
H_k(\alpha+\beta)\leq2^{\ar{(k)}}H_k(\alpha)H_k(\beta).
\]
\end{fact}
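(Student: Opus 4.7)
The plan is to verify the stated inequality one place at a time and then take the product over $\vv\in\mathcal{M}_k$. At each place $\vv$, the strategy is to bound the local factor $\max{\lp\lb1,\lV\alpha+\beta\rV_\vv\rb\rp}$ appearing in $H_k(\alpha+\beta)$ above by a constant $c_\vv\geq1$ times the corresponding product $\max{\lp\lb1,\lV\alpha\rV_\vv\rb\rp}\cdot\max{\lp\lb1,\lV\beta\rV_\vv\rb\rp}$.

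The choice of $c_\vv$ will depend on the type of place. For a nonarchimedean $\vv$, I will use the ultrametric inequality to take $c_\vv=1$; for a real archimedean place, I will apply the triangle inequality on the Euclidean absolute value on the completion to take $c_\vv=2$; and for a complex archimedean place, I will apply the triangle inequality and then square (since the definition in the setup takes $\lV\,\cdot\,\rV_\vv$ to be the square of the Euclidean absolute value on $\C$) to take $c_\vv=4$. In each case the passage from the bound on $\lV\alpha+\beta\rV_\vv$ to the corresponding bound on $\max{\lp\lb1,\lV\alpha+\beta\rV_\vv\rb\rp}$ is routine, since $c_\vv\geq1$ implies $\max{\lp\lb1,c_\vv\max{\lp\lb\lV\alpha\rV_\vv,\lV\beta\rV_\vv\rb\rp}\rb\rp}\leq c_\vv\max{\lp\lb1,\lV\alpha\rV_\vv\rb\rp}\max{\lp\lb1,\lV\beta\rV_\vv\rb\rp}$.

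After multiplying these local bounds together, the global inequality will become
\[
H_k(\alpha+\beta)\leq\lp\prod_{\vv\in\mathcal{M}_k}c_\vv\rp H_k(\alpha)H_k(\beta).
\]
Because all nonarchimedean places contribute $1$ and the archimedean places consist of $r$ real and $s$ complex places, the product reduces to $2^r\cdot4^s=2^{r+2s}=2^{\ar{(k)}}$, yielding the claim. I do not anticipate a real obstacle; the one point worth highlighting is the factor of $4$ at complex places, arising from the squared normalization recalled just before the statement of the fact, which is precisely what forces the exponent $r+2s$ (rather than $r+s$) to appear in the bound.
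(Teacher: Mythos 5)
Your proof is correct, and it is in fact more self-contained than what the paper offers: the paper states this as a \emph{Fact} and, in the remark immediately following it, defers entirely to \cite[Proposition~1.5.15]{BG} for the number-field case (noting that the absolute heights there are a power of the $H_k$ defined here) and merely asserts that the positive-characteristic case is ``similar, and easier.'' No proof appears in the paper itself. Your place-by-place argument fills that in directly: at nonarchimedean $\vv$ the ultrametric inequality gives $c_\vv = 1$; at real places the triangle inequality gives $c_\vv = 2$; at complex places, because $\lV\,\cdot\,\rV_\vv$ is here normalized as the \emph{square} of the Euclidean absolute value, squaring the triangle-inequality bound gives $c_\vv = 4$; and the elementary observation that for $c\geq1$ and $x\geq0$ one has $\max(\{1,cx\})\leq c\max(\{1,x\})$ passes each local bound through the $\max(\{1,\cdot\})$ appearing in the definition of $H_k$. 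Since a global field has only finitely many archimedean places (a fact the paper records just before defining $\ar(k)$), the product $\prod_\vv c_\vv$ is finite and equals $2^r\cdot 4^s = 2^{r+2s} = 2^{\ar(k)}$, which is exactly where the exponent $r+2s$ rather than $r+s$ comes from. The one small thing worth noting explicitly, which you implicitly rely on, is that the local factor of $H_k$ is $1$ for all but finitely many $\vv$ (so both sides are genuine finite products); this is guaranteed by the first bullet in the paper's definition of ``satisfies the product formula.'' Your approach handles number fields and function fields uniformly, which is arguably preferable to the paper's citation-plus-remark.
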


\begin{remark}\label{bombcaveat}
See \cite[Proposition~1.5.15]{BG} for a proof of this fact for absolute heights on number fields (which are a power of the heights we have defined).
The proof of \cref{bomb} is similar, and easier, in the case $\Char{(k)}>0$.
\end{remark}

\begin{named}{Heights and Iterates Theorem}\label{silverman}
Suppose that $k$ is a global field and $\phi\in k(X)$, say with $\phi=f/g$ for $f,g\in k[X]$.
For all $\gamma\in\P^1(k)$,
\[
H_k(\phi(\gamma))\leq b_{k,f,g}H_k(\gamma)^{\deg{(\phi(X))}}.
\]
\end{named}

\begin{remark}\label{silvermancaveat}
See \cite[Theorem~3.11]{Sil}.
As in \cref{bombcaveat}, we mention that Theorem~3.11 of~\cite{Sil} addresses heights on number fields.
As above, the case when $\Char{(k)}>0$ is similar and easier.
\end{remark}

\noindent The following lemma is a version of the \ref{silverman} that holds for iterates of finite subsets of projective space.

\begin{lemma}\label{setsilverman}
Suppose that $k$ is a global field, that $\phi\in k(X)$, and that $C$ is a finite subset of $\P^1(k)$.
Suppose further that $f,g\in k[X]$ and $\phi=f/g$.
Write $d=\deg{\phi}$ and $c=c_{k,f,g,C}$.
If $d\geq2$, then for all $\gamma\in C$, $n\in\Z_{\geq0}$, and $m\in\lb0,1,\ldots,n\rb$,
\[
H_k(\phi^m(\gamma))<c^{d^n}.
\]
\end{lemma}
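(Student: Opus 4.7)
The plan is straightforward induction on $m$ using the \ref{silverman}. First, I would establish by induction on $m\in\Z_{\geq 0}$ the auxiliary bound
\[
H_k(\phi^m(\gamma))\leq b_{k,f,g}^{(d^m-1)/(d-1)}\,H_k(\gamma)^{d^m}
\]
for every $\gamma\in\P^1(k)$. The base case $m=0$ is an equality, and the inductive step applies the \ref{silverman} to $\phi^m(\gamma)$ and uses the geometric-series identity $1+d\cdot(d^m-1)/(d-1)=(d^{m+1}-1)/(d-1)$ to collect the powers of $b_{k,f,g}$ appearing on the right-hand side.

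Second, I would upgrade the auxiliary bound to the desired strict inequality in the form demanded by the statement. Since $d\geq 2$, a short algebraic check gives $(d^m-1)/(d-1)<d^m$; combined with $b_{k,f,g}\geq 2>1$ (so that strictly increasing the exponent strictly increases the value), the auxiliary inequality strengthens to
\[
H_k(\phi^m(\gamma))<\bigl(b_{k,f,g}\,H_k(\gamma)\bigr)^{d^m}.
\]

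Third, by the definition of $c_{k,f,g,C}$ I have $b_{k,f,g}\,H_k(\gamma)\leq c_{k,f,g,C}$ for every $\gamma\in C$, and because $c_{k,f,g,C}\geq 2>1$ and $m\leq n$, also $c_{k,f,g,C}^{d^m}\leq c_{k,f,g,C}^{d^n}$. Chaining these two observations with the inequality from the previous paragraph yields $H_k(\phi^m(\gamma))<c_{k,f,g,C}^{d^n}$.

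I do not anticipate a real obstacle: the lemma is essentially a packaged iteration of the \ref{silverman}, and the constant $c_{k,f,g,C}$ was defined precisely to absorb both the geometric-sum exponent $(d^m-1)/(d-1)$ on $b_{k,f,g}$ and the initial heights of the points in $C$. The only minor care needed is preserving strict inequality when $m=0$ (where the auxiliary bound is an equality), which is immediate from $b_{k,f,g}>1$ together with $H_k(\gamma)\geq 1$.
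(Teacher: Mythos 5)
Your proof is correct and follows essentially the same route as the paper's: iterate the \ref{silverman} to get the geometric-sum bound $b^{1+d+\cdots+d^{m-1}}H_k(\gamma)^{d^m}$, then trade the geometric-sum exponent for $d^n$ using $b\geq 2$ and $H_k(\gamma)\geq 1$, and finally absorb $b H_k(\gamma)$ into $c_{k,f,g,C}$. The only cosmetic difference is that you upgrade the $b$-exponent to $d^m$ before enlarging $m$ to $n$ (the paper does these in the opposite order and disposes of the cases $m=0$ or $n=0$ separately at the outset), which lets you handle those edge cases inside the main chain; both are valid and of equal substance.
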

\begin{proof}
Write $b$ for $b_{k,f,g}$.
Since $b\geq2$, the lemma is certainly true if $m=0$ or $n=0$, so suppose that $m$ and $n$ are positive.
Then for all $\gamma\in C$,
\begin{align*}
H_k\lp\phi^m(\gamma)\rp
&\leq b^{1+d+\cdots+d^{m-1}}H_k(\gamma)^{d^m}
&&(\text{apply the \ref{silverman} $m$ times})\\
&\leq b^{1+d+\cdots+d^{n-1}}H_k(\gamma)^{d^n}
&&(\text{since $b\geq1$})\\
&<\lp bH_k(\gamma)\rp^{d^n}
&&(\text{since $d\geq2$ and $b\geq2$})\\
&\leq c^{d^n}
&&(\text{by definition of }c_{k,f,g,C}).
\end{align*}
\end{proof}

For any global field $k$, rational function $\phi\in k(X)$, finite subset $C\subseteq\P^1(k)$, and nonarchimedean place $\vv\in\mathcal{P}_k$, it is quite possible that size of the image of $C$ in $\P^1\lp[k]_\vv\rp$ is smaller than $\lv C\rv$.
The following theorem gives a bound ensuring that for nonarchimedean places of large enough norm, this possibility does not arise.
It is a generalization of \cite[Lemma~7.2]{JuulP}, which addresses the case where $k$ is a number field, the rational function $\phi$ is a polynomial, and $C\subseteq\A^1(k)$.

\begin{proposition}\label{juulheightgeneral}
Suppose that $k$ is a global field, that $\phi\in k(X)$, that $C$ is a finite subset of $\P^1(k)$, that $\mathfrak{v}\in\mathcal{P}_k$, and that $n\in\Z_{\geq0}$.
Suppose further that $f,g\in k[X]$ and $\phi=f/g$.
Write $d=\deg{\phi}$ and $c=c_{k,f,g,C}$.
If
\begin{itemize}
\item
$d\geq2$,
\item
$\gamma_1,\gamma_2\in C$,
\item
$m_1,m_2\in\lb0,\ldots,n\rb$,
\item
$\phi^{m_1}\lp\gamma_1\rp\neq\phi^{m_2}\lp\gamma_2\rp$, and
\item
$\lc\phi^{m_1}\lp\gamma_1\rp\rc_\vv=\lc\phi^{m_2}\lp\gamma_2\rp\rc_\mathfrak{v}$,
\end{itemize}
then $\norm{(\mathfrak{v})}<2^{\ar{(k)}}c^{2d^n}$.
\end{proposition}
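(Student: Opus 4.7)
The plan is to apply the product formula to the $2\times 2$ determinant formed from homogeneous coordinates of $P \colonequals \phi^{m_1}(\gamma_1)$ and $Q \colonequals \phi^{m_2}(\gamma_2)$, and then invoke the height bounds supplied by \cref{setsilverman}. I would first write $P = [p_1 : p_0]$ and $Q = [q_1 : q_0]$ in $\P^1(k)$. Since $P \neq Q$, the element $\Delta \colonequals p_1 q_0 - p_0 q_1$ of $k$ is nonzero, and so \cref{ArtinWhaplesFact} gives $\prod_{\ww \in \mathcal{M}_k} \lV \Delta \rV_\ww = 1$.

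The key step is to convert the hypothesis $[P]_\vv = [Q]_\vv$ into a quantitative estimate on $\lV \Delta \rV_\vv$. After rescaling $(p_1,p_0)$ and $(q_1,q_0)$ to be $\vv$-adically primitive, the equality of reductions in $\P^1([k]_\vv)$ is equivalent to $\Delta \equiv 0 \pmod{\vv}$, i.e.\ $\lV \Delta \rV_\vv \leq \norm{(\vv)}^{-1}$; since the quotient $\lV \Delta \rV_\vv / \lp\max\lb\lV p_1 \rV_\vv, \lV p_0 \rV_\vv\rb \cdot \max\lb\lV q_1 \rV_\vv, \lV q_0 \rV_\vv\rb\rp$ is scale-invariant, this produces the coordinate-free bound
\[
\lV \Delta \rV_\vv \leq \norm{(\vv)}^{-1}\cdot\max\lb\lV p_1 \rV_\vv, \lV p_0 \rV_\vv\rb \cdot \max\lb\lV q_1 \rV_\vv, \lV q_0 \rV_\vv\rb.
\]
At each other place $\ww$, the ultrametric or archimedean triangle inequality applied to $\Delta$ yields the analogous bound with $\norm{(\vv)}^{-1}$ replaced by $2^{a_\ww}$, where $a_\ww = 0$ if $\ww$ is nonarchimedean, $a_\ww = 1$ if $\ww$ is real, and $a_\ww = 2$ if $\ww$ is complex (so that $\sum_\ww a_\ww = \ar(k)$). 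Taking the product over all $\ww$, using $\prod_\ww \lV \Delta \rV_\ww = 1$, and invoking the standard identity $H_k([x:y]) = \prod_\ww \max\lb\lV x \rV_\ww, \lV y \rV_\ww\rb$ (a quick consequence of the paper's definition of $H_k$ on $\P^1(k)$ together with \cref{ArtinWhaplesFact}), I would deduce
\[
1 \leq \norm{(\vv)}^{-1} \cdot 2^{\ar(k)} \cdot H_k(P) H_k(Q),
\]
equivalently, $\norm{(\vv)} \leq 2^{\ar(k)} H_k(P) H_k(Q)$. The proof is then completed by applying \cref{setsilverman} (with the same $n$) to both $P$ and $Q$ to obtain $H_k(P), H_k(Q) < c^{d^n}$, yielding $\norm{(\vv)} < 2^{\ar(k)} c^{2d^n}$.

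The main obstacle will be the first step: turning the qualitative statement $[P]_\vv = [Q]_\vv$ into the quantitative valuation-theoretic estimate on $\lV \Delta \rV_\vv$. This requires choosing $\vv$-primitive representatives of $P$ and $Q$, recognizing that equality of reductions in $\P^1([k]_\vv)$ is exactly the vanishing of the associated $2\times 2$ minor modulo $\vv$, and then observing that the inequality obtained in these privileged coordinates is in fact scale-invariant (so it holds for any representatives). A secondary care point is the archimedean bookkeeping, where the convention that $\lV\,\cdot\,\rV_\vv$ at a complex place is the square of the Euclidean absolute value forces $a_\ww = 2$ rather than $1$, so that the combined archimedean contribution becomes $2^{r+2s} = 2^{\ar(k)}$.
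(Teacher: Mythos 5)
Your proof is correct, and it takes a genuinely different (and arguably cleaner) route than the paper. The paper proves the proposition by a case split on the location of the common reduction point: when both reduce to $[1:0]_\vv$, it works with the second homogeneous coordinate $\alpha$ of a point ``near infinity,'' and when both reduce to an affine point, it works with the difference $\alpha - \beta$ of dehomogenized coordinates, invoking \cref{bomb} for the archimedean estimate $H_k(\alpha - \beta) \leq 2^{\ar(k)} H_k(\alpha) H_k(\beta)$. Your argument avoids the case split entirely by working with the homogeneous determinant $\Delta = p_1 q_0 - p_0 q_1$ (the quantity underlying the $\vv$-adic chordal distance on $\P^1$), observing that $[P]_\vv = [Q]_\vv$ is exactly $\lV\Delta\rV_\vv \le \norm{(\vv)}^{-1}$ for $\vv$-primitive representatives, that this bound is scale-invariant, and that the archimedean factors of $\prod_\ww \lV\Delta\rV_\ww$ can be estimated directly by the triangle inequality rather than by quoting \cref{bomb}; you also use the clean identity $H_k([x:y]) = \prod_\ww \max\{\lV x\rV_\ww, \lV y\rV_\ww\}$, which the paper does not state. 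Both proofs ultimately rest on the product formula and \cref{setsilverman} and give the same constant. What your approach buys is uniformity: the determinant formulation treats the point at infinity on the same footing as affine points, so no WLOG or case distinction is needed, and the archimedean bookkeeping (in particular the factor of $2$ at each real place and $4$ at each complex place, giving $2^{\ar(k)}$) becomes a single transparent computation rather than an appeal to a separate height-of-a-sum lemma. The one thing to be sure to state carefully in a polished write-up is that the quantity $\lV\Delta\rV_\vv / (\max\{\lV p_1\rV_\vv,\lV p_0\rV_\vv\}\cdot\max\{\lV q_1\rV_\vv,\lV q_0\rV_\vv\})$ is invariant under independent rescaling of $(p_1,p_0)$ and $(q_1,q_0)$, and that for $\vv$-primitive coordinates the paper's ad hoc definition of $[\,\cdot\,]_\vv$ agrees with reducing the primitive representative; you identified both of these as the crux, so the argument is sound.
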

\begin{proof}
We proceed by analyzing two cases.
\begin{itemize}
\item
Suppose that $\lc\phi^{m_1}\lp\gamma_1\rp\rc_\vv=\lc\phi^{m_2}\lp\gamma_2\rp\rc_\mathfrak{v}=[1:0]_\vv$.
Since $\phi^{m_1}\lp\gamma_1\rp\neq\phi^{m_2}\lp\gamma_1\rp$, either $\phi^{m_1}\lp\gamma_1\rp\neq[1:0]$ or $\phi^{m_2}\lp\gamma_2\rp\neq[1:0]$; without loss of generality, suppose the former is true.
Then there exists $\alpha\in k$ such that $\phi^{m_1}\lp\gamma_1\rp=[1:\alpha]$ and $\lV\alpha\rV_\vv<1$.
Thus, we use \cref{ArtinWhaplesFact} to see that
\[
\norm{(\vv)}
\leq\norm{(\vv)}^{v_\vv(\alpha)}
=\lV\alpha\rV_\vv^{-1}
=\prod_{\substack{\ww\in\mathcal{M}_k\\\ww\neq\vv}}{\lV\alpha\rV_\ww}
\leq H_k(\alpha)
\leq2^{\ar{(k)}}H_k\lp\phi^{m_1}\lp\gamma_1\rp\rp H_k\lp\phi^{m_2}\lp\gamma_2\rp\rp.
\]
\item
On the other hand, suppose there are $\alpha,\beta\in k$ such that $\phi^{m_1}\lp\gamma_1\rp=[\alpha:1]$ and $\phi^{m_2}\lp\gamma_2\rp=[\beta:1]$.
Then $\lV\alpha-\beta\rV_\vv<1$, and we use \cref{ArtinWhaplesFact} and \cref{bomb} to see that
\[
\norm{(\vv)}
\leq\norm{(\vv)}^{v_\vv{(\alpha-\beta)}}
=\lV\alpha-\beta\rV_\vv^{-1}
\leq H_k(\alpha-\beta)
\leq 2^{\ar{(k)}}H_k(\alpha)H_k(\beta).
\]
\end{itemize}
The result now follows from \cref{setsilverman}.
\end{proof}

\section{Effective image size of specialized rational functions}
\label{imagesizetwo}

Before proceeding, we recall some basic facts about wreath products.
Suppose that $G,H$ are finite groups and $T$ is a finite set.
If $\tau\colon H\to \Aut_{\textsf{Set}}{(T)}$ is an action of $H$ on $T$, then we use the coordinate permutation action of $H$ on $G^{\lv T\rv}$ to construct the group $G^{\lv T\rv}\rtimes H$; we denote this group by $G\wr_\tau H$.
Now, if $S$ is a another finite set and $\sigma$ is an action of $G$ on $S$, then $G\wr_\tau H$ acts on $S\times T$ via
\[
\lp\lp g_i\rp_{i\in T},h\rp\colon(s,t)\mapsto(\sigma\lp g_t\rp(s),\tau(h)(t));
\]
we denote this action by $\sigma\wr\tau$.

We may also define an iterated wreath product: if $G$ is a finite group and $\sigma$ is an action of $G$ on a finite set $S$,
\begin{itemize}
\item
write $[G]^1$ for $G$ and $[\sigma]^1$ for $\sigma$, and
\item
for any $n\in\Z_{\geq2}$, write $[G]^n$ for $[G]^{n-1}\wr_{\sigma} G$ and $[\sigma]^n$ for $[\sigma]^{n-1}\wr\sigma$.
\end{itemize}
Thus, for all $n\in\Z_{\geq1}$, we see that $[\sigma]^n$ is an action of $[G]^n$ on $S^n$; in this situation, we will write $\Fix_n{\lp\sigma\rp}$ and $\fix_n{(\sigma)}$ for $\Fix{\lp[\sigma]^n\rp}$ and $\fix{\lp[\sigma]^n\rp}$, respectively.
The following remark follows immediately from the definitions.
\begin{fact}\label{wreathsize}
If $\rho$ is an action of a finite group $G$ on a finite set $S$, then for every $n\in\Z_{\geq1}$,
\[
\lv[G]^n\rv=\lv G\rv^{1+\lv S\rv+\cdots+\lv S\rv^{n-1}}.
\]
\end{fact}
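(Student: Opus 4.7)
The plan is a routine induction on $n$. For the base case $n=1$, the definition gives $[G]^1=G$, so $\lv[G]^1\rv=\lv G\rv=\lv G\rv^1$, and the exponent $1+\lv S\rv+\cdots+\lv S\rv^{n-1}$ reduces to the single term $\lv S\rv^0=1$, matching. For the inductive step, the only input needed is the standard order formula for a wreath product: whenever a finite group $H$ acts on a finite set $T$ via $\tau$, the underlying set of $A\wr_\tau H=A^{\lv T\rv}\rtimes H$ is $A^{\lv T\rv}\times H$, so $\lv A\wr_\tau H\rv=\lv A\rv^{\lv T\rv}\cdot\lv H\rv$. This is immediate from the construction of the semidirect product and does not depend on $\tau$.

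Applying this general formula to the recursive definition $[G]^n=[G]^{n-1}\wr_\sigma G$, with $\sigma$ the given action of $G$ on $S$, yields
\[
\lv[G]^n\rv=\lv[G]^{n-1}\rv^{\lv S\rv}\cdot\lv G\rv.
\]
Substituting the induction hypothesis $\lv[G]^{n-1}\rv=\lv G\rv^{1+\lv S\rv+\cdots+\lv S\rv^{n-2}}$ and collecting exponents produces the geometric progression $1+\lv S\rv+\cdots+\lv S\rv^{n-1}$ as desired.

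There is no real obstacle; the authors themselves flag the statement as following immediately from the definitions. The only bookkeeping point is to keep straight that the action occurring at every level of the iterated construction is the fixed action $\sigma$ of $G$ on $S$, so the number of copies of $[G]^{n-1}$ in the base of the semidirect product is always exactly $\lv S\rv$. That consistency is precisely what forces the exponent to be a geometric sum in $\lv S\rv$ rather than something more elaborate.
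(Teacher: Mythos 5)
Your proof is correct and is precisely the routine induction that the paper has in mind when it states the fact "follows immediately from the definitions" without giving further detail. The key observation—that $\lv A\wr_\tau H\rv=\lv A\rv^{\lv T\rv}\cdot\lv H\rv$ regardless of the action $\tau$, applied to the recursion $[G]^n=[G]^{n-1}\wr_\sigma G$ with $T=S$ fixed at every level—is exactly the content of the remark preceding the fact.
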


It turns out that the Galois groups of the extensions studied in \cref{onenn} and \cref{finalgeneral} are always subgroups of certain wreath products; we record this fact formally in the \ref{jkmtthm}, below.
The \ref{jkmtthm} also includes a criterion ensuring maximality of Galois groups as well as consequences that follow from maximality.
\ref{jkmtthm}~\hyperref[subgroupofwreath]{(\ref*{subgroupofwreath})} follows from~\cite[Lemma~2.5]{JKMT}, \ref{jkmtthm}~\hyperref[wreathcriterion]{(\ref*{wreathcriterion})} follows from Theorem~3.1 and Remark~3.2 of~\cite{JKMT}, and \ref{jkmtthm}~\hyperref[algclosed]{(\ref*{algclosed})} follows from~\cite[Proposition~3.6]{JKMT}.
Finally, \ref{jkmtthm}~\hyperref[fixedsame]{(\ref*{fixedsame})} follows from~\cite[Lemma~4.1]{Odoni}.
To state the theorem, we introduce the following terminology.
If $k$ is a field and $\phi\in k(X)$, let
\[
\Crit{\lp k,\phi\rp}
=\lb\gamma\in\P^1(k)\mid\gamma\text{ is a critical point of $\phi$}\rb.
\]
We will say that the critical points of $\phi$ are \emph{defined over $k$} if for every field extension $\widehat{k}$ of $k$,
\[
\Crit{\lp\widehat{k},\phi\rp}
=\Crit{\lp k,\phi\rp}.
\]
If $C\subseteq\P^1(k)$ and $n\in\Z_{\geq1}$, we say that $C$ is \emph{$\phi$-disjoint for $n$} if for all $\gamma_1,\gamma_2\in C$ and $m_1,m_2\in\lb0,\ldots,n\rb$,
\[
\text{if $\phi^{m_1}\lp\gamma_1\rp=\phi^{m_2}\lp\gamma_2\rp$, then $\gamma_1=\gamma_2$ and $m_1=m_2$.}
\]
If $C$ is $\phi$-disjoint for every positive integer, we will simply say that $C$ is \emph{$\phi$-disjoint}.

\begin{named}{Wreath Product Theorem}\label{jkmtthm}
Suppose that $k$ is a field, that $\phi\in k(X)$, and that $n\in\Z_{\geq2}$.
Write $K=k(t)$, and for every $m\in\Z_{\geq1}$, let $L_m$ be the splitting field of $\phi^m(X)-t$ over $K$.
Suppose that $\phi^\prime(X)\neq0$, so that for all $m\in\Z_{\geq1}$, the field extension $L_m/K$ is Galois by \cref{sepduh}.
Let $G=\Gal{\lp L_1/K\rp}$ and let $\rho$ be the action of $G$ on the roots of $\phi(X)-t$ in $L_1$.
\begin{enumerate}
\item\label{subgroupofwreath}
Then $\Gal{\lp L_n/K\rp}$ is isomorphic to a subgroup of $[G]^n$.
\item\label{wreathcriterion}
Suppose that $C\subseteq\Crit{(k,\phi)}.$
If
\begin{itemize}
\item
$k$ is algebraically closed in $L_1$,
\item
the critical points of $\phi$ are defined over $k$,
\item
$C$ is $\phi$-disjoint for $n$, and
\item
$\lv\Crit{(k,\phi)}\setminus C\rv\leq1$,
\end{itemize}
then $\Gal{\lp L_n/K\rp}\simeq[G]^n$.
\item\label{whatweknow}
If $\Gal{\lp L_n/K\rp}\simeq[G]^n$, then
\begin{enumerate}
\item\label{fixedsame}
$\fix_K{\lp\phi^n(X)-t\rp}=\fix_n{\lp\rho\rp}$ and
\item\label{algclosed}
if $m\in\lb1,\ldots,n-1\rb$ and $m\mid n$, then $k$ is algebraically closed in $L_m$.
\end{enumerate}
\end{enumerate}
\end{named}

\begin{remark}
A priori, the discussion in \cite[Remark~3.2]{JKMT} implies \ref{jkmtthm}~\hyperref[wreathcriterion]{(\ref*{wreathcriterion})} only in the case where $C=\Crit{(k,\phi)}\setminus\lb\infty\rb$.
However, the version we state above follows easily from their argument.
Indeed, if $\gamma\in\P^1(k)$ and $C=\Crit{(k,\phi)}\setminus\lb\gamma\rb$, choose any coordinate change $\mu\in\PGL{(k)}$ with $\mu(\infty)=\gamma$, let $\phi^\mu=\mu^{-1}\circ\phi\circ\mu$, then apply their remark to $\phi^\mu$ and $\Crit{\lp k,\phi^\mu\rp}\setminus\lb\infty\rb$, noting that $L_n$ is a splitting field for $\lp\phi^\mu\rp^n(X)-\mu^{-1}(t)$ over $k\lp\mu^{-1}(t)\rp=k(t)=K$ and
\[
\Crit{\lp k,\phi^\mu\rp}\setminus\lb\infty\rb
=\lb\mu^{-1}(\delta)\mid\delta\in\Crit{(k,\phi)}\rb\setminus\lb\mu^{-1}(\gamma)\rb
=\lb\mu^{-1}(\delta)\mid\delta\in C\rb.
\]
\end{remark}

\begin{remark}
Similarly, the statement of \cite[Proposition~3.6]{JKMT} is only the $m=1$ case of \ref{jkmtthm}~\hyperref[algclosed]{(\ref*{algclosed})}.
To deduce the more general statement, we apply this case to the rational function $\phi^m$.
\end{remark}

Before proving \cref{wedidit}, we pause to introduce terminology connecting our previous work on residually finite Dedekind domains in \cref{imagesizeone} to our current setting of global fields.
Suppose that $k$ is a global field and $R$ is a subring of $k$.
If $R$ is a Dedekind domain with $\Frac{(R)}=k$, then $R$ is necessarily residually finite, so for every $\p\in\mathcal{P}_R$, we may write $\vv_\p$ for the associated (nonarchimedean) place of $k$.
If it is also the case that the function
\begin{align*}
\mathcal{P}_R&\to\mathcal{P}_k\\
\p&\mapsto\vv_\p
\end{align*}
is cofinite, we will say that $R$ is an \emph{Artin-Whaples subring of $k$}.

\begin{fact}[See Theorem~3 of~\cite{AW}]\label{classification}
If $k$ is a global field, then $k$ has an Artin-Whaples subring.
\end{fact}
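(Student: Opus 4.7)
My plan is to invoke the Artin--Whaples classification of global fields (Theorem~3 of~\cite{AW}), which asserts that every global field is either a number field or a finite separable extension of $\F_p(t)$ for some prime $p$. Given this dichotomy, I would produce the Artin--Whaples subring $R$ explicitly in each of the two cases.

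First, if $\Char{(k)}=0$, I would take $R$ to be the usual ring of integers $\mathcal{O}_k$. Standard algebraic number theory produces that $R$ is a Dedekind domain with $\Frac{(R)}=k$, and the assignment $\p\mapsto\vv_\p$ is a bijection between $\mathcal{P}_R$ and $\mathcal{P}_k$; in particular, it is cofinite.

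Second, if $\Char{(k)}>0$, I would fix any $\vv_0\in\mathcal{P}_k$ and set
\[
R=\lb\alpha\in k\mid v_\ww(\alpha)\geq0\text{ for all }\ww\in\mathcal{P}_k\setminus\lb\vv_0\rb\rb.
\]
Choosing any $s\in k$ whose only pole lies at $\vv_0$ exhibits $R$ as the integral closure of $\F_p[s]$ in $k$ (after replacing $s$ by a separating element if necessary); hence $R$ is a Dedekind domain with $\Frac{(R)}=k$, and $\p\mapsto\vv_\p$ provides a bijection between $\mathcal{P}_R$ and $\mathcal{P}_k\setminus\lb\vv_0\rb$, which is cofinite in $\mathcal{P}_k=\mathcal{M}_k$ (global fields of positive characteristic have no archimedean places).

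There is no serious technical obstacle in this argument; the only care required is recognizing that each of the two standard constructions above---the ring of algebraic integers in the number field case, and the ``affine'' coordinate ring obtained by deleting a single closed point of the smooth projective model of $k$ in the function field case---produces a Dedekind subring whose nonzero prime spectrum maps cofinitely onto the nonarchimedean places of $k$, which is exactly the definition of an Artin--Whaples subring.
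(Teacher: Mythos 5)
Your proposal is correct, and it is essentially the same as what the paper intends: the paper simply cites Theorem~3 of~\cite{AW} (the Artin--Whaples classification of fields satisfying the product formula as number fields and function fields over finite fields), and your two-case construction --- the ring of integers $\mathcal{O}_k$ in characteristic zero, and the ring of functions regular away from a chosen place $\vv_0$ (equivalently, the integral closure of $\F_p[s]$ for an $s$ with sole pole at $\vv_0$) in positive characteristic --- is exactly the standard unpacking of that citation. A minor remark: the parenthetical ``after replacing $s$ by a separating element'' is harmless but unnecessary, since by Krull--Akizuki the integral closure of $\F_p[s]$ in the finite extension $k$ is a Dedekind domain regardless of separability, and in any case $\F_p$ is perfect so $k/\F_p$ is automatically separably generated.
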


\noindent Recall that if $\lp S,\phi\rp$ and $\lp S^\prime,\phi^\prime\rp$ are dynamical systems, we say a function $\sigma\colon S\to S^\prime$ is an \emph{isomorphism of dynamical systems} if $\sigma$ is bijective and $\sigma\circ\phi=\phi^\prime\circ\sigma$.
Of course, if $\lp S,\phi\rp$ and $\lp S^\prime,\phi^\prime\rp$ are isomorphic, then $\sigma\lp\Per{\lp S,\phi\rp}\rp=\Per{\lp S^\prime,\phi^\prime\rp}$.
The following remark is immediate.

\begin{remark}\label{isomorphic}
Suppose that $k$ is a global field, that $\phi\in k(X)$, and that $R$ is an Artin-Whaples subring of $k$.
For all $\p\in\Spec{(R)}$, then dynamical systems $\lp\P^1\lp[R]_\p\rp,[\phi]_\p\rp$ and $\lp\P^1\lp[k]_{\vv_\p}\rp,[\phi]_{\vv_\p}\rp$ are isomorphic via the natural isomorphism of fields $[R]_\p\to[k]_{\vv_\p}$.
\end{remark}

We may now use the height results of \cref{heights} to apply \ref{jkmtthm} and \cref{onelevel} to deduce an effective version of \cref{finalgeneral}.
As~\cref{wedidit} holds for rational functions over arbitrary global fields, it is a generalization of~\cite[Theorem~1.5]{JuulP}, which holds for polynomials over number fields.

\begin{theorem}\label{wedidit}
Suppose that $k$ is a global field, that $\phi\in k(X)$, that $C\subseteq\Crit{\lp k,\phi\rp}$, and that $\epsilon>0$.
Suppose further that $f,g\in k[X]$ and $\phi=f/g$.
Write 
\begin{itemize}
\item
$d$ for $\deg{\phi}$, and
\item
$K$ for $k(t)$.
\end{itemize}
Suppose that
\begin{enumerate}
\item\label{sephypx}
$\phi^\prime(X)\neq0$,
\end{enumerate}
and for every $m\in\Z_{\geq1}$, let
\begin{itemize}
\item
$L_m$ be the splitting field of $\phi^m(X)-t$ over $K$, which is Galois by \cref{sepduh}, and
\item
$G_m$ be $\Gal{\lp L_m/K\rp}$.
\end{itemize}
Finally, write $G$ for $G_1$ and let $\rho$ be the action of $G$ on the roots of $\phi(X)-t$ in $L_1$.
If
\begin{enumerate}
\setcounter{enumi}{1}
\item\label{deghypx}
$\deg{\phi}\geq2$,
\item\label{algclosedatstart}
$k$ is algebraically closed in $L_1$,
\item\label{gottabetame}
either $\Char{(k)}=0$ or $\gcd{\lp\Char{(k)},\lv G\rv\rp}=1$,
\item\label{critinkx}
the critical points of $\phi$ are defined over $k$,
\item\label{collidex}
$C$ is $\phi$-disjoint, and
\item\label{Cbig}
$\lv\Crit{(k,\phi)}\setminus C\rv\leq1$,
\end{enumerate}
then there exists $N\in\Z_{\geq1}$ such that for all $\vv\in\mathcal{P}_k$ with $\norm{(\vv)}>N$,
\begin{itemize}
\item $n_{k,f,g,C,\epsilon}(\vv)\geq1$ and
\item
for all $n\in\lb1,\ldots,n_{k,f,g,C,\epsilon}(\vv)\rb$,
\[
\lv\frac{\lv[\phi]_\vv^n\lp[k]_\vv\rp\rv}
{\lv\P^1\lp[k]_\vv\rp\rv}-\fix_{n}{\lp\rho\rp}\rv
<\frac{7d}{\norm{(\vv)}^{\frac{3}{2}-\epsilon}}.
\]
\end{itemize}
In particular, for all such $\vv$ and $n$,
\[
\frac{\lv\Per{\lp\P^1\lp [k]_\vv\rp,[\phi]_\vv\rp}\rv}
{\lv\P^1\lp[k]_\vv\rp\rv}
<\fix_n{\lp\rho\rp}
+\frac{7d}{\norm{(\vv)}^{\frac{3}{2}-\epsilon}}.
\]
\end{theorem}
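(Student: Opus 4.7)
My plan combines the Wreath Product Theorem \ref{jkmtthm}, the height non-collision statement \cref{juulheightgeneral}, and the \ref{effCheb}. First, I use \cref{classification} to choose an Artin-Whaples subring $R\subseteq k$; \cref{isomorphic} then lets me work interchangeably with $\p\in\mathcal{P}_R$ and $\vv_\p\in\mathcal{P}_k$. Hypotheses~\hyperref[algclosedatstart]{(\ref*{algclosedatstart})}, \hyperref[critinkx]{(\ref*{critinkx})}, \hyperref[collidex]{(\ref*{collidex})}, and~\hyperref[Cbig]{(\ref*{Cbig})} let me apply \ref{jkmtthm}\hyperref[wreathcriterion]{(\ref*{wreathcriterion})} at every level $n\geq2$ (the case $n=1$ is trivial), yielding $G_n\simeq[G]^n$ for all $n\geq1$; \ref{jkmtthm}\hyperref[algclosed]{(\ref*{algclosed})} (applied at level $2n$) then shows $k$ is algebraically closed in every $L_n$, and \ref{jkmtthm}\hyperref[fixedsame]{(\ref*{fixedsame})} identifies $\fix_K{\lp\phi^n(X)-t\rp}$ with $\fix_n{(\rho)}$.

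Next, I apply \cref{onelevel} just once---to the extension $L_1/K$---together with \cref{plentygood} and, when $\Char{(k)}=0$, the observation that only finitely many primes of $R$ contain any divisor of $\lv G\rv$. This produces $N_0\in\Z_{\geq1}$ such that for every $\p\in\mathcal{P}_R$ with $\norm{(\p)}>N_0$: $\phi$ has good reduction at $\p$, specialization at $\p$ induces an isomorphism on the level-$1$ Galois group, and $\gcd{\lp\Char\lp[R]_\p\rp,\lv G_n\rv\rp}=1$ for every $n$ (since $\lv G_n\rv$ is a power of $\lv G\rv$).

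To propagate the isomorphism of Galois groups up to level $n_{k,f,g,C,\epsilon}(\vv)$ uniformly in $\vv$, I invoke \cref{juulheightgeneral} applied to $C$: the $n=0$ instance, combined with good reduction, shows that the critical points of $[\phi]_\vv$ are precisely the reductions of those of $\phi$ and hence are defined over $[k]_\vv$, while any failure of $[\phi]_\vv$-disjointness of the reduction of $C$ at depth $n$ would force $\norm{(\vv)}<2^{\ar{(k)}}c_{k,f,g,C}^{2d^n}$. The $2\log d$ denominator of $n_{k,f,g,C,\epsilon}(\vv)$ is engineered so that $\norm{(\vv)}\geq 2^{\ar{(k)}}c_{k,f,g,C}^{2d^{2n}}\geq 2^{\ar{(k)}}c_{k,f,g,C}^{2d^{n}}$ for all $n\leq n_{k,f,g,C,\epsilon}(\vv)$, ruling out such collisions. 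Reapplying \ref{jkmtthm}\hyperref[wreathcriterion]{(\ref*{wreathcriterion})} to the specialized dynamical system then gives $\Gal$ of the splitting field of $[\phi]_\vv^n(X)-t$ over $[k]_\vv(t)$ isomorphic to $[G]^n$ for every such $n$.

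Finally, I invoke the \ref{effCheb} at each such $n$---tameness of the relevant extension follows from the coprimality arranged above---to obtain the bound $7nd\lv G\rv^{(d^n-1)/(d-1)}\fix_n{(\rho)}/\norm{(\vv)}^{3/2}$ on the relevant difference. The claim reduces to absorbing $n\lv G\rv^{(d^n-1)/(d-1)}\leq n(d!)^{d^n}$ into $\norm{(\vv)}^\epsilon$. Here the $4\log(d!)/\epsilon$ clause of $n_{k,f,g,C,\epsilon}(\vv)$ yields $d^{2n}\log(d!)\leq\epsilon\lp\log\norm{(\vv)}-\ar{(k)}\log 2\rp/4$, hence $d^n\log(d!)\leq\frac{1}{2}\sqrt{\epsilon\log\norm{(\vv)}\log(d!)}$, which is bounded by $\epsilon\log\norm{(\vv)}$ once $\log\norm{(\vv)}\geq\log{(d!)}/(4(d-1)^2\epsilon)$; the $\log n$ contribution is absorbed similarly. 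I fix $N$ large enough that all preceding estimates hold and $n_{k,f,g,C,\epsilon}(\vv)\geq1$. The ``in particular'' conclusion follows from $\Per\lp\P^1\lp[k]_\vv\rp,[\phi]_\vv\rp\subseteq[\phi]_\vv^n\lp\P^1\lp[k]_\vv\rp\rp$. The main obstacle is the delicate calibration of $n_{k,f,g,C,\epsilon}(\vv)$: its two denominators must simultaneously rule out height-collisions at depth $n$ and keep the Galois-group-size factor absorbable into $\norm{(\vv)}^\epsilon$, which is exactly what \cref{exdef} achieves.
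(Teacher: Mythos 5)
Your proposal follows essentially the same approach as the paper: choose an Artin--Whaples subring, apply \cref{onelevel} once at level~1, use \cref{juulheightgeneral} and the $2\log d$ denominator of $n_{k,f,g,C,\epsilon}$ to preserve $\phi$-disjointness of the specialized critical points to depth~$2n$, re-run the \ref{jkmtthm} at the special fiber, and then finish with the \ref{effCheb} and an $\epsilon$-absorption estimate. There is one step you leave implicit that is worth flagging: the \ref{effCheb} requires that the residue field $[k]_\vv$ be algebraically closed in the splitting field of $[\phi]_\vv^n(X)-t$. You correctly arrange $[\phi]_\vv$-disjointness to depth $2n$, and this is exactly what makes the argument work, but you only state that \ref{jkmtthm}\hyperref[wreathcriterion]{(\ref*{wreathcriterion})} gives $\Gal\simeq[G]^n$ at the special fiber. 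You must also apply \ref{jkmtthm}\hyperref[wreathcriterion]{(\ref*{wreathcriterion})} at level~$2n$ to obtain $\Gal\simeq[G]^{2n}$ and then invoke \ref{jkmtthm}\hyperref[algclosed]{(\ref*{algclosed})} to conclude that $[k]_\vv$ is algebraically closed in the level-$n$ splitting field; without this, the \ref{effCheb} hypothesis is unverified. (You do this for the \emph{generic} fiber, but the specialized version is the one that matters for \ref{effCheb}.) A second, smaller point: in the final absorption you write the sufficient condition as $\log\norm{(\vv)}\geq\log{(d!)}/\lp4(d-1)^2\epsilon\rp$, but the arithmetic you set up requires $\log\norm{(\vv)}\geq\log{(d!)}/(4\epsilon)$; the extraneous $(d-1)^2$ makes the stated threshold too weak, though this is a slip rather than a structural flaw. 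The paper avoids this by bounding $\lv[G]^n\rv<\norm{(\p)}^{\epsilon/2}$ and separately bounding $n\leq n_\p<\norm{(\p)}^{\epsilon/2}$ via a constant $N_2$; you may find that cleaner.
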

\begin{proof}
Use \cref{classification} to choose an Artin-Whaples subring of $k$, and call it $R$.
Let $A=R[t]$ and for every $m\in\Z_{\geq1}$, let $B_m$ be the integral closure of $A$ in $L_m$.

Thanks to hypotheses~\hyperref[sephypx]{(\ref*{sephypx})}, \hyperref[algclosedatstart]{(\ref*{algclosedatstart})}, \hyperref[critinkx]{(\ref*{critinkx})}, \hyperref[collidex]{(\ref*{collidex})}, and \hyperref[Cbig]{(\ref*{Cbig})}, we may apply
\ref{jkmtthm}~\hyperref[wreathcriterion]{(\ref*{wreathcriterion})} and~\hyperref[algclosed]{(\ref*{algclosed})} to deduce that for all $m\in\Z_{\geq0}$,
\begin{itemize}
\item
$G_m\simeq[G]^m$ and
\item
$k$ is algebraically closed in $L_m$.
\end{itemize}
We may now apply the definition of Artin-Whaples subring, \cref{plentygood}, and \cref{onelevel} (thanks to hypotheses~\hyperref[sephypx]{(\ref*{sephypx})} and~\hyperref[algclosedatstart]{(\ref*{algclosedatstart})}) to produce an $N_1\in\Z_{\geq1}$ such that
\begin{itemize}
\item
for all $\vv\in\mathcal{P}_k$ with $\norm{(\vv)}>N_1$, there exists $\p\in\mathcal{P}_R$ with $\vv_\p=\vv$ and
\item
for all $\p\in\mathcal{P}_R$ with $\norm{(\p)}>N_1$,
\begin{itemize}
\item
$\phi$ has good reduction at $\p$,
\item
$\p B_1$ is prime,
\item
$\lc B_1\rc_{\p B_1}$ is a Galois (in particular separable) extension of $[A]_{\p A}$,
\item
$\rho_{\p B_1\vert\p A}$ is an isomorphism, and
\item
$[R]_\p$ is algebraically closed in $\lc B_1\rc_{\p B_1}$.
\end{itemize}
\end{itemize}
Now,
\begin{itemize}
\item
choose any $N_2\in\Z_{\geq1}$ such that for all $q\in\Z$ with $q>N_2$,
\[
\log{\log{q}}<q^{\frac{\epsilon}{2}},
\]
\item
let $N_3=2^{\ar{(k)}}c_{k,f,g,C}^{(2d^2)}$,
\item
let $N_4$ be any integer greater than $2^{\ar{(k)}}(d!)^{4d/\epsilon}$, and
\item
\begin{itemize}
\item
if $\Char{(k)}>0$, let $N_5=1$ and
\item
if $\Char{(k)}=0$, use the fact that $R$ is a Dedekind domain to choose any positive integer $N_5$ with the property that for all $\p\in\mathcal{P}_R$ with $\norm{(\p)}> N_5$,
\[
\gcd{\lp\Char{\lp[R]_{\p}\rp},\lv G\rv\rp}=1.
\]
\end{itemize}
\end{itemize}
Finally, set $N=\max{(\lb N_1,N_2,N_3,N_4,N_5\rb)}$.

Choose any $\p\in\mathcal{P}_R$ with $\norm{(\p)}>N$.
For any $\gamma\in\P^1(k)$, we will write $[\gamma]_\p$ for $[\gamma]_{\vv_\p}$.
Similarly, we write $[C]_\p$ for $\lb[\gamma]_\p\mid\gamma\in C\rb$.
Set
\[
n_\p=n_{k,f,g,C,\epsilon}\lp\vv_\p\rp,
\]
which is defined since $\norm{\lp\vv_\p\rp}=\norm{(\p)}>N_3>2^{\ar{(k)}}$.
Note that
\begin{itemize}
\item
by definition of $N_3$ and $N_4$, it follows that $n_\p\geq1$ and
\item
by definition of $n_\p$, we know $\norm{(\p)}=\norm{\lp\vv_\p\rp}\geq2^{\ar{(k)}}c_{k,f,g,C}^{\lp2d^{2n_\p}\rp}$.
\end{itemize}
Thus, by hypotheses~\hyperref[deghypx]{(\ref*{deghypx})} and~\hyperref[collidex]{(\ref*{collidex})}, and \cref{juulheightgeneral}, we see that $[C]_\p$ is $[\phi]_\p$-disjoint for $2n_\p$.
Now fix any $n\in\lb1,\ldots,n_\p\rb$.
As the definition of $N_1$ ensures that $[R]_\p$ is algebraically closed in $\lc B_1\rc_{\p B_1}$ and $\Gal{\lp\lc B_1\rc_{\p B_1}/[A]_{\p A}\rp}\simeq G$, we use hypotheses~\hyperref[critinkx]{(\ref*{critinkx})} and~\hyperref[Cbig]{(\ref*{Cbig})} to note that the critical points of $[\phi]_\p$ are defined over $[R]_\p$ and $\lv\Crit{\lp[R]_\p,[\phi]_p\rp}\setminus[C]_\p\rv\leq1$.
Thus, we may apply \ref{jkmtthm}~\hyperref[wreathcriterion]{(\ref*{wreathcriterion})} and~\hyperref[algclosed]{(\ref*{algclosed})} to deduce that
\begin{itemize}
\item
$\Gal{\lp\lc B_{2n}\rc_{\p B_{2n}}/[A]_{\p A}\rp}\simeq\lc G\rc^{2n}$,
\item
$[R]_\p$ algebraically closed in $\lc B_{n}\rc_{\p B_{n}}$, and
\item
$\Gal{\lp\lc B_n\rc_{\p B_n}/[A]_{\p A}\rp}\simeq\lc G\rc^{n}$.
\end{itemize}
To see that the extension $\lc B_{n}\rc_{\p B_{n}}/[A]_{\p A}$ is tamely ramified, note that
\begin{itemize}
\item
if $\Char{(k)}>0$, then \cref{wreathsize} and hypothesis~\hyperref[gottabetame]{(\ref*{gottabetame})} imply that
\[
\gcd{\lp\Char{\lp[A]_{\p A}\rp},\lv[G]^{n}\rv\rp}
=\gcd{\lp\Char{(k)},\lv G\rv\rp}=1,
\]
and
\item
if $\Char{(k)}=0$, then \cref{wreathsize} and the definition of $N_5$ imply that
\[
\gcd{\lp\Char{\lp[A]_{\p A}\rp},\lv[G]^{n}\rv\rp}
=\gcd{\lp\Char{\lp[R]_\p\rp},\lv G\rv\rp}
=1.
\]
\end{itemize}
Thus, we apply the \ref{effCheb} and \ref{jkmtthm}~\hyperref[fixedsame]{(\ref*{fixedsame})} to deduce that
\[
\lv\frac{\lv[\phi]_\p^{n}\lp[R]_\p\rp\rv}
{\lv\P^1\lp[R]_\p\rp\rv}-\fix_n{\lp\rho\rp}\rv
<\frac{7nd\lv[G]^n\rv}{\norm{(\p)}^{3/2}}\cdot\fix_n{\lp\rho\rp}.
\]
Finally, note that
\begin{align*}
\lv[G]^n\rv
&=\lv G\rv^{1+\cdots+d^{\lp n-1\rp}}
&&\lp\text{by \cref{wreathsize}}\rp\\
&<\lv G\rv^{2d^{\lp n_\p-1\rp}}
&&\lp\text{since $d\geq2$}\rp\\
&\leq\lp d!\rp^{2d^{\lp n_\p-1\rp}}
&&\lp\text{by definition of $G$}\rp\\
&<\lp d!\rp^{\frac{\epsilon\log{\norm{(\p)}}}{2\log{(d!)}}}
&&\lp\text{by definition of $n_\p$}\rp\\
&=\norm{(\p)}^{\frac{\epsilon}{2}}.
\end{align*}
Since $\fix_n{\lp\rho\rp}\leq1$ and $n\leq n_\p\leq\log{\lp\log{\norm{(\p)}}\rp}$, the theorem follows by the definition of $N_2$ and \cref{isomorphic}.

\end{proof}
\noindent We now take a moment to remark that the construction of the constant $N$ in the proof of \cref{wedidit} is almost entirely explicit, especially in positive characteristic; taking advantage of this explicitness leads to the following porism.

\begin{porism}\label{effectivewedidit}
Keep the notation and hypotheses of \cref{wedidit}, and suppose further that $N_1\in\Z_{\geq1}$ and $R$ is an Artin-Whaples subring of $K$ such that for all $\vv\in\mathcal{P}_k$ with $\norm{(\vv)}>N_1$, there exists $\p\in\mathcal{P}_R$ such that $\vv=\vv_\p$.
Write $A$ for $R[t]$ and $B_1$ for the integral closure of $A$ in $L_1$.
Suppose that $\Char{(k)}>0$ and for all $\p\in\mathcal{P}_R$ with $\norm{(\p)}>N_1$,
\begin{itemize}
\item
$\phi$ has good reduction at $\p$,
\item
$\p B_1$ is prime,
\item
$\lc B_1\rc_{\p B_1}$ is a Galois extension of $[A]_{\p A}$,
\item
$\rho_{\p B_1\vert\p A}$ is an isomorphism, and
\item
$[R]_\p$ is algebraically closed in $\lc B_1\rc_{\p B_1}$.
\end{itemize}
Then for all $\vv\in\mathcal{P}_k$ with $\norm{(\vv)}>\max{\lp\lb N_1,c_{k,f,g,C}^{2d^2},(d!)^{4d}\rb\rp}$,
\begin{itemize}
\item $n_{k,f,g,C,1}(\vv)\geq1$ and
\item
for all $n\in\lb1,\ldots,n_{k,f,g,C,1}(\vv)\rb$,
\[
\lv\frac{\lv[\phi]_\vv^n\lp[k]_\vv\rp\rv}
{\lv\P^1\lp[k]_\vv\rp\rv}-\fix_{n}{\lp\rho\rp}\rv
<\frac{7d}{\norm{(\vv)}^{\frac{1}{2}}}.
\]
\end{itemize}
In particular, for all such $\vv$ and $n$,
\[
\frac{\lv\Per{\lp\P^1\lp [k]_\vv\rp[\phi]_\vv\rp}\rv}
{\lv\P^1\lp[k]_\vv\rp\rv}
<\fix_n{\lp\rho\rp}
+\frac{7d}{\norm{(\vv)}^{\frac{1}{2}}}.
\]
\end{porism}
\begin{proof}
This follows immediately from the proof of \cref{wedidit}: set $\epsilon=1$ and compute $N_2,N_3,N_4,N_5$ while keeping in mind that $\ar{(k)=0}$.
\end{proof}

\section{Applications}
\label{applications}

We now apply \cref{wedidit} and \cref{effectivewedidit} to deduce the results mentioned in the introduction, taking advantage of the fact that for many group actions $\rho$ and integers $n\in\Z_{\geq1}$, Juul~\cite{JuulP} has computed effective bounds for $\fix_n{(\rho)}$.

\begin{definition}
For any $d\in\Z_{\geq1}$, let $\rho_{S,d}$, $\rho_{A,d}$, $\rho_{D,d}$, $\rho_{C,d}$ denote the usual actions of $S_d$, $A_d$, $D_d$, $\Z/d\Z$ on $\lb1,\ldots,d\rb$.
\end{definition}
\noindent The following result records the upper bounds proved in Proposition~4.2, Proposition~4.5, Proposition~4.8, and Proposition~4.10 of~\cite{JuulP}.

\begin{named}{Juul's Wreath Bounds}\label{jfixed}
Suppose $d\in\Z_{\geq2}$.
For all $n\in\Z_{\geq1}$,
\begin{enumerate}
\item
\[
\fix_n{\lp\rho_{S,d}\rp}\leq\frac{2}{n+2},
\]
\item
if $d\geq5$, then
\[
\fix_n{\lp\rho_{A,d}\rp}\leq\frac{2}{n+2},
\]
and
\[
\fix_n{\lp\rho_{A,4}\rp}\leq\frac{2}{n+1-\log{n}},
\]
\item
if $d\geq3$, then
\[
\fix_n{\lp\rho_{D,d}\rp}\leq\frac{2}{n+2},
\]
and
\item
\[
\fix_n{\lp\rho_{C,d}\rp}\leq\frac{2}{(d-1)(n+1)}.
\]
\end{enumerate}
\end{named}

\noindent The combination of \cref{wedidit} and \ref{jfixed} immediately implies the following theorem.

\begin{theorem}\label{effectiveimagesize}
Keep the notation and hypotheses of \cref{wedidit}, and assume that $\rho$ is isomorphic to $\rho_{S,d}$, $\rho_{A,d}$, $\rho_{D,d}$, or $\rho_{S,d}$.
Then there exists $N\in\Z_{\geq1}$ such that for all $\vv\in\mathcal{P}_k$ with $\norm{(\vv)}>N$,
\[
\frac{\lv\Per{\lp\P^1\lp[k]_\vv\rp,[\phi]_\vv\rp}\rv}
{\lv\P^1\lp[k]_\vv\rp\rv}
<\begin{cases}
\frac{4\log{d}}{\log{\log{\norm{(\vv)}}}}
+\frac{7d}{\norm{(\vv)}^{\frac{3}{2}-\epsilon}}
&\text{if }\rho\nsimeq\rho_{A,4},\\
\frac{1+4\log{d}}{\log{\log{\norm{(\vv)}}}}
+\frac{7d}{\norm{(\vv)}^{\frac{3}{2}-\epsilon}}
&\text{if }\rho\simeq\rho_{A,4}.
\end{cases}
\]
\end{theorem}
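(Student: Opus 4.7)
The plan is to combine \cref{wedidit} with the specific bounds on $\fix_n(\rho)$ recorded in \ref{jfixed}. First I would apply \cref{wedidit} to produce some $N_0\in\Z_{\geq 1}$ such that for all $\vv\in\mathcal{P}_k$ with $\norm{(\vv)}>N_0$ and all $n\in\lb 1,\ldots,n_\vv\rb$, where $n_\vv\colonequals n_{k,f,g,C,\epsilon}(\vv)$,
\[
\frac{\lv\Per\lp\P^1\lp[k]_\vv\rp,[\phi]_\vv\rp\rv}{\lv\P^1\lp[k]_\vv\rp\rv}<\fix_n(\rho)+\frac{7d}{\norm{(\vv)}^{3/2-\epsilon}}.
\]
Since \ref{jfixed} bounds $\fix_n(\rho)$ by a decreasing function of $n$ for each of the four listed actions, the natural choice is to set $n=n_\vv$ itself, which is the largest value allowed.

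For $\rho\simeq\rho_{S,d}$, $\rho_{A,d}$ with $d\geq 5$, $\rho_{D,d}$, or $\rho_{C,d}$, \ref{jfixed} gives $\fix_{n_\vv}(\rho)\leq 2/(n_\vv+2)$ (with $\rho_{C,d}$ being strictly sharper). The remaining task is to show
\[
\frac{2}{n_\vv+2}<\frac{4\log d}{\log\log\norm{(\vv)}}
\]
for $\norm{(\vv)}$ sufficiently large. Writing $B$ for the numerator of the expression defining $n_\vv$, the floor estimate $n_\vv\geq B/(2\log d)-1$ gives $n_\vv+2\geq(B+2\log d)/(2\log d)$, so it suffices to verify $B+2\log d>\log\log\norm{(\vv)}$. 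Unpacking the definition of $B$, this reduces to an inequality of the shape $d^2(\log\norm{(\vv)}-\ar(k)\log2)>M\log\norm{(\vv)}$, where $M$ denotes the $\max$ appearing in the definition of $n_\vv$; this holds once $\log\norm{(\vv)}$ is large enough to dominate $M$ and $\ar(k)$, which I would build into the threshold $N$.

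For $\rho\simeq\rho_{A,4}$, \ref{jfixed} only gives $\fix_{n_\vv}(\rho_{A,4})\leq 2/(n_\vv+1-\log n_\vv)$, and the extra $+1$ in the numerator $1+4\log d$ is present precisely to absorb the $-\log n_\vv$ defect: since $\log n_\vv/n_\vv\to 0$ as $\norm{(\vv)}\to\infty$, enlarging $N$ further guarantees $(1+4\log d)(n_\vv+1-\log n_\vv)>2\log\log\norm{(\vv)}$, yielding the stated bound. The main obstacle is therefore not conceptual but bookkeeping: I must choose $N$ depending on $k,\phi,f,g,C,\epsilon$ so that each of these asymptotic inequalities becomes strict for every $\vv$ with $\norm{(\vv)}>N$. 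Once this threshold is in place, the theorem follows directly from substituting Juul's bounds into the inequality produced by \cref{wedidit}.
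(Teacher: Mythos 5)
Your overall strategy matches the paper's, whose proof is literally ``This follows immediately from \cref{wedidit}, \ref{jfixed}, and \cref{exdef}''; you simply spell out the bookkeeping by taking $n = n_{k,f,g,C,\epsilon}(\vv)$ in \cref{wedidit} and substituting \ref{jfixed}. However, your inequality chase for the non-$\rho_{A,4}$ cases contains an error, and carrying it out carefully exposes a subtlety that the paper's one-line proof also passes over. You reduce the desired bound to $d^2\lp\log\norm{(\vv)} - \ar{(k)}\log 2\rp > M\log\norm{(\vv)}$, where $M$ is the max appearing in the definition of $n_{k,f,g,C,\epsilon}$, and claim this holds once $\log\norm{(\vv)}$ is large enough. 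Rearranging gives $\lp d^2 - M\rp\log\norm{(\vv)} > d^2\ar{(k)}\log 2$; when $M \geq d^2$ the left-hand side is nonpositive for every $\vv$ while the right-hand side is nonnegative, so the inequality never holds, no matter how large $\norm{(\vv)}$ is. Since $M = \max\lp\lb 2\log{\lp c_{k,f,g,C}\rp},\, 4\log{(d!)}/\epsilon\rb\rp$ can easily exceed $d^2$ within the hypotheses of \cref{wedidit} (take $\epsilon$ small, or $C$ containing a critical point of large height), this is not a degenerate edge case.

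Concretely, the strongest bound extractable from \cref{wedidit} and \ref{jfixed} with $n = n_{k,f,g,C,\epsilon}(\vv)$ is
\[
\fix_n\lp\rho\rp \leq \frac{4\log d}{\log\lp\log\norm{(\vv)} - \ar{(k)}\log 2\rp - \log M + 2\log d},
\]
and once $M \geq d^2$ the denominator here is strictly smaller than $\log\log\norm{(\vv)}$ for every $\vv$ in range, so the stated bound $4\log d/\log\log\norm{(\vv)}$ does not follow from the cited ingredients in general; one would need either an extra hypothesis such as $d^2 > M$ or a slightly weaker constant. Your $\rho_{A,4}$ argument actually highlights the distinction: there the numerator $1 + 4\log d$ strictly exceeds the $4\log d$ forced by the asymptotic $n_{k,f,g,C,\epsilon}(\vv) \sim \log\log\norm{(\vv)}/(2\log d)$, and the surplus $+1$ supplies the slack to absorb the $-\log M$ term, so that case does hold for all sufficiently large $\norm{(\vv)}$. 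The other cases have no such slack, and the assertion that the inequality holds once $\log\norm{(\vv)}$ dominates $M$ and $\ar{(k)}$ is precisely where your proof breaks down; the paper's terse ``follows immediately'' does not confront the point either.
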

\begin{proof}
This follows immediately from \cref{wedidit}, \ref{jfixed}, and \cref{exdef}.
\end{proof}

To apply~\cref{effectivewedidit}, we now turn to the special case where $q$ is an odd prime and $k=\F_q(s)$.

\begin{proof}[Proof of \cref{dandm}]
Set
\begin{itemize}
\item
$R=\F_q[s]$,
\item
$k=\F_q(s)$,
\item
$A=\F_q[s,t]$, and
\item
$K=\F_q(s,t)$.
\end{itemize}
For any monic irreducible $\pi\in R$, we write $\p_\pi$ for the prime ideal $\pi R$ and $\vv_\pi$ for the associated $\p_\pi$-adic place of $k$ in $\mathcal{P}_k$.
Let $\phi(X)=X^d+s^m$, let $f(X)=\phi(X)-t$, let $L_1$ be the splitting field of $f$ over $K$, let $B_1$ the integral closure of $A$ in $L_1$, let $u$ be any root of $f$ lying in $L_1$, let $G=\Gal{\lp L_1/K\rp}$, and let $\rho$ be the action of $G$ on the roots of $f$ in $L$.
Since $q\equiv1\pmod{d}$, we know $K$ contains a primitive $d$th root of unity, so that $L_1=K(u)=\F_q(s,u)$ and $G\simeq\Z/d\Z$.
Moreover, as $\F_q[s,u]$ is integrally closed in $\F_q(s,u)$, we see that $B_1=\F_q[s,u]$.

Now, for any monic irreducible polynomial $\pi\in R$, we know
\[
B_1/{\p_\pi B_1}
=\F_q[s,u]/\pi(s)\F_q[s,u]
\simeq\lp R/\p_\pi\rp[u],
\]
so we see
\begin{itemize}
\item
$\p_\pi B_1$ is prime and
\item
$[R]_{\p_\pi}=R/\p_\pi$ is algebraically closed in $\lc B_1\rc_{\p_\pi B_1}$.
\end{itemize}
Moreover, since $\gcd{(q,d)}=1$ by hypothesis, we know $[f]_{\p_\pi A}$ is separable.
And since $u$ is a primitive element for the extension $L_1/K$, \cref{prereduction} implies that $\lc B_1\rc_{\p_\pi B_1}/[A]_{\p_\pi A}$ is a Galois extension and $\rho_{\p_\pi B_1\vert\p_\pi A}$ is an isomorphism.
Thus, we set $N_1=q$, compute $c_{k,\phi,1,\lb0\rb}=q^m$, and apply \cref{effectivewedidit}:
for any monic irreducible polynomial $\pi\in R$ with $\deg{(\pi)}>\max{\lp\lb2md^2,4d\log_q{(d!)}\rb\rp}$,
\[
\frac{\lv\Per{\lp\P^1\lp[R]_{\p_\pi}\rp,[\phi]_{\p_\pi}\rp}\rv}
{\lv\P^1\lp[R]_{\p_\pi}\rp\rv}
=\frac{\lv\Per{\lp\P^1\lp[k]_{\vv_\pi}\rp[\phi]_{\vv_\pi}\rp}\rv}
{\lv\P^1\lp[k]_{\vv_\pi}\rp\rv}
<\fix_{n_{k,\phi,1,\lb0\rb,1}\lp\vv_\pi\rp}{\lp\rho\rp}
+\frac{7d}{\norm{(\vv_\pi)}^{\frac{1}{2}}}.
\]

Let $\pi_\alpha\in R$ be the minimal polynomial of $\alpha$ over $\F_q$, so that
\[
\frac{\lv\Per{\lp \P^1\lp\F_{q^r}\rp,X^d+\alpha^m\rp}\rv}
{\lv\P^1\lp\F_{q^r}\rp\rv}
=\frac{\lv\Per{\lp\P^1\lp\F_q(\alpha)\rp, X^d+\alpha^m\rp}\rv}
{\lv\P^1\lp\F_q(\alpha)\rp\rv}
=\frac{\lv\Per{\lp\P^1\lp[R]_{\p_\alpha}\rp,[\phi]_{\p_\alpha}\rp}\rv}
{\lv\P^1\lp[R]_{\p_\alpha}\rp\rv}.
\]
Since $\deg{\lp\pi_\alpha\rp}>\max{\lp\lb2md^2,4d\log_q{(d!)}\rb\rp}$ by hypothesis, the result follows by \ref{jfixed} and \cref{exdef}.
\end{proof}



Of course, for prime powers $q$ and positive integers $r$, most elements $\alpha\in\F_{q^r}$ have the property that $\F_q(\alpha)=\F_{q^r}$; we use this fact to deduce the following theorem on averages of periodic points.
For ease of notation, for the rest of the paper, we will write $\F_{q^r}^\text{prim}$ for the set $\lb\beta\in\F_{q^r}\mid\F_q(\beta)=\F_{q^r}\rb$.

\begin{theorem}\label{dandmave}
Suppose that $q$ is a prime power, that $r,m\in\Z_{\geq1}$, and that $d\in\Z_{\geq2}$.
If $q\equiv1\pmod{d}$ and $r>\max{\lp\lb2md^2,4d\log_q{(d!)}\rb\rp}$, then
\begin{align*}
\frac{1}{\lv\lp\F_{q^r}\rp^m\rv}\cdot\sum_{\beta\in\lp\F_{q^r}\rp^m}
{\frac{\lv\Per{\lp\P^1\lp\F_{q^r}\rp, X^d+\beta\rp}\rv}
{\lv\P^1\lp\F_{q^r}\rp\rv}}&\\
&\hspace{-175px}<\frac{4\log{(d)}\gcd{\lp q^r-1,m\rp}}
{(d-1)\lp\log{\lp\log{q^r}-\log{2}\rp}-\log{\max{\lp\lb\log{q^{2m}},\log{(d!)^4}\rb\rp}}\rp}
+\frac{(7d+2)\gcd{\lp q^r-1,m\rp}}{q^{\frac{r}{2}}}.
\end{align*}
In particular,
\[
\frac{1}{\lv\lp\F_{q^r}\rp^m\rv}\cdot
\sum_{\beta\in\lp\F_{q^r}\rp^m}
{\frac{\lv\Per{\lp\P^1\lp\F_{q^r}\rp, X^d+\beta\rp}\rv}
{\lv\P^1\lp\F_{q^r}\rp\rv}}
=O_{q,m,d}\lp\frac{1}{\log{\log{q^r}}}\rp.
\]
\end{theorem}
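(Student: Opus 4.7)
Set $T=\gcd{(q^r-1,m)}$ and write $f(\beta)=\lv\Per{(\P^1(\F_{q^r}),X^d+\beta)}\rv/\lv\P^1(\F_{q^r})\rv$. The plan is to reduce the average over $\beta\in(\F_{q^r})^m$ to an averaged sum over $\F_{q^r}$ via the $m$th-power map, so that \cref{dandm} can be applied pointwise to each $\alpha\in\F_{q^r}^\text{prim}$. The key preliminary observation is that $\alpha\mapsto\alpha^m$ is a $T$-to-$1$ surjection $\F_{q^r}^*\twoheadrightarrow(\F_{q^r}^*)^m$, from which one obtains $\lv(\F_{q^r})^m\rv=(q^r-1)/T+1$ and the identity $\sum_{\beta\in(\F_{q^r})^m}f(\beta)=f(0)+\tfrac{1}{T}\sum_{\alpha\in\F_{q^r}^*}f(\alpha^m)$.

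Next, I would split $\F_{q^r}^*=\F_{q^r}^\text{prim}\sqcup(\F_{q^r}^*\setminus\F_{q^r}^\text{prim})$. For $\alpha\in\F_{q^r}^\text{prim}$, all hypotheses of \cref{dandm} hold (in particular $\F_q(\alpha)=\F_{q^r}$, $q\equiv1\pmod{d}$, and $r>\max{(\lb2md^2,4d\log_q{(d!)}\rb)}$), so it yields the bound $f(\alpha^m)<\tfrac{4\log{d}}{(d-1)D}+\tfrac{7d}{q^{r/2}}$ where $D=\log{(\log{q^r}-\log{2})}-\log{\max{(\lb\log{q^{2m}},\log{(d!)^4}\rb)}}$. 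For $\alpha\notin\F_{q^r}^\text{prim}$, I would use the trivial bound $f(\alpha^m)\leq1$ together with the elementary count $\lv\F_{q^r}\setminus\F_{q^r}^\text{prim}\rv\leq\sum_{s\mid r,\,s<r}q^s\leq2q^{r/2}$, which follows because non-primitive elements lie in proper subfields $\F_{q^s}$ with $s\leq r/2$ and the geometric sum is dominated by its top term.

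Finally, I would divide by $\lv(\F_{q^r})^m\rv$ and assemble the pieces. The crucial ratios $\lv(\F_{q^r})^m\rv\geq(q^r-1)/T$ and $\lv\F_{q^r}^\text{prim}\rv/\lv(\F_{q^r})^m\rv\leq T$ are what force the $\gcd{(q^r-1,m)}$ factors into both the main and error terms of the stated estimate. The ``in particular'' consequence $O_{q,m,d}(1/\log{\log{q^r}})$ is then immediate because $T\leq m$ is bounded in terms of $m$ and the denominator of the main term grows like $\log{\log{q^r}}$ by direct inspection of $D$. The main obstacle will be the careful bookkeeping of the various $T$-factors arising from the non-bijectivity of the power map, ensuring that the contribution $\tfrac{2q^{r/2}}{T}$ from the non-primitive summands and the $\tfrac{7d}{q^{r/2}}$ error term from \cref{dandm} combine cleanly into the stated $(7d+2)T/q^{r/2}$ remainder.
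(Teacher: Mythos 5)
Your proposal is correct and follows essentially the same route as the paper's proof: partition into primitive and non-primitive elements, apply \cref{dandm} to each primitive $\alpha$ (via $\alpha^m$), use the bound $\lv\F_{q^r}\setminus\F_{q^r}^\text{prim}\rv\leq2q^{r/2}$ for the rest, and then divide by $\lv(\F_{q^r})^m\rv\geq(q^r-1)/T$. The only differences from the paper are minor and cosmetic. First, the paper packages the sum over primitive elements by Galois orbits, i.e.\ as $\sum_{\pi\in\mathcal{P}_q(r)}\sum_{\alpha\in S_q(\pi)}$ over monic irreducible $\pi\in\F_q[s]$ of degree $r$, and then uses $r\lv\mathcal{P}_q(r)\rv\leq q^r$; your direct bound $\lv\F_{q^r}^\text{prim}\rv\leq q^r$ achieves the same thing. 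Second, the paper does not use the exact $T$-to-$1$ identity for the power map: it simply bounds $\sum_{\beta\in(\F_{q^r}^\text{prim})^m}f(\beta)$ by the (potentially larger) sum $\sum_{\alpha}f(\alpha^m)$ and recoups the lost factor of $T$ only via the cardinality $\lv(\F_{q^r})^m\rv\geq q^r/T$. Your exact identity $\sum_{\beta\in(\F_{q^r})^m}f(\beta)=f(0)+\tfrac{1}{T}\sum_{\alpha\in\F_{q^r}^*}f(\alpha^m)$ is a genuine (small) refinement: carrying it through gives roughly $\tfrac{q^r-1}{q^r+T-1}\kappa+\tfrac{2}{q^{r/2}}+\tfrac{T}{q^r}$ rather than $T\kappa+\tfrac{2T}{q^{r/2}}$, which is strictly smaller for $T>1$ and still implies the stated bound when $T=1$ (since $\tfrac{q^r-1}{q^r}<1$). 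So both arguments close cleanly; the paper's is a bit cruder but arranged so that the $T$-factors land exactly where the theorem statement needs them.
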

\begin{proof}
We begin by setting
\[
\kappa_{q,d,m}(r)
=\frac{4\log{d}}
{(d-1)\lp\log{\lp\log{q^r}-\log{2}\rp}
-\log{\max{\lp\lb\log{q^{2m}},\log{(d!)^4}\rb\rp}}\rp}
+\frac{7d}{q^{\frac{r}{2}}}.
\]
Write $\mathcal{P}_q(r)$ for the set of monic irreducible polynomials in $\F_q[s]$ of degree $r$ and for any such polynomial $\pi$, write $S_q(\pi)$ for its roots in $\F_{q^r}$.
If we let $\phi(X)=X^d+s^m\in\F_q[s][X]$, then we see that
\begin{align*}
\sum_{\beta\in\lp\F_{q^r}^\text{prim}\rp^m}
{\frac{\lv\Per{\lp\P^1\lp\F_{q^r}\rp, X^d+\beta\rp}\rv}
{\lv\P^1\lp\F_{q^r}\rp\rv}}
&\leq\sum_{\pi\in\mathcal{P}_q(r)}
{\sum_{\alpha\in S_q(\pi)}
{\frac{\lv\Per{\lp\P^1\lp\F_{q^r}\rp, X^2+\alpha^m\rp}\rv}
{\lv\P^1\lp\F_{q^r}\rp\rv}}}&&\\
&=\sum_{\pi\in\mathcal{P}_q(r)}
{r\cdot\frac{\lv\Per{\lp\P^1\lp\F_q[s]/\pi\F_q[s]\rp,\lc\phi\rc_\pi\rp}\rv}
{\lv\P^1\lp\F_{q^r}\rp\rv}}&&\\
&<r\lv\mathcal{P}_q(r)\rv
\kappa_{q,d,m}(r)
&&\hspace{-40px}\text{(by \cref{dandm})}\\
&\leq q^r\kappa_{q,d,m}(r)
&&\hspace{-40px}\text{(since $\lv\mathcal{P}_q(r)\rv\leq r^{-1}q^r$)}.
\end{align*}

Next, we recall that there are at most $2q^{r/2}$ elements of $\F_{q^r}$ whose minimal polynomial is of degree less than $r$.
Thus,
\[
\sum_{\beta\in\lp\F_{q^r}\setminus\F_{q^r}^\text{prim}\rp^m}
{\frac{\lv\Per{\lp\P^1\lp\F_{q^r}\rp, X^2+\beta\rp}\rv}
{\lv\P^1\lp\F_{q^r}\rp\rv}}
\leq\sum_{\beta\in\lp\F_{q^r}\setminus\F_{q^r}^\text{prim}\rp^m}
{\hspace{-18px}1\hspace{18px}}
\leq2q^{\frac{r}{2}},
\]
so that
\[
\frac{1}{1+\frac{q^r}{\gcd{\lp q^r-1,m\rp}}}
\lp q^r\kappa_{q,d,m}(r)
+2q^{\frac{r}{2}}\rp
<\gcd{\lp q^r-1,m\rp}
\lp\kappa_{q,d,m}(r)
+\frac{2}{q^{\frac{r}{2}}}\rp,
\]
as desired.
\end{proof}

Finally, we address the specific case of quadratic polynomials.

\begin{proof}[Proof of \cref{degtwoave}]
Let $Q=\lb f\in\F_{q^r}[X]\mid\deg{f}=2\rb$ and $U=\lb X^2+\delta\mid\delta\in\F_{q^r}\rb$.
Since $\Char{\lp\F_q\rp}\neq2$, for any $\alpha\in\F_{q^r}\setminus\lb0\rb$ and $\beta\in\F_{q^r}$ we may define the following coordinate change on $\P^1\lp\F_{q^r}\rp$:
\[
\mu_{\alpha,\beta}:x\mapsto\alpha x+\frac{\beta}{2}.
\]
Next, we set
\begin{align*}
\mu\colon\hspace{70px} Q&\to U\\
\alpha X^2+\beta X+\gamma&\mapsto X^2-\frac{\beta^2-4\alpha\gamma-2\beta}{4}.
\end{align*}
This map is clearly surjective.
Moreover, if $\delta\in\F_{q^r}$, then
\[
\lv\mu^{-1}\lp X^2+\delta\rp\rv=
\begin{cases}
q^{2r}-q^r&\text{if }1-4\delta\text{ is not a square in }\F_{q^r}\\
q^{2r}+q^r&\text{if }1-4\delta\text{ is a nonzero square in }\F_{q^r}\\
q^{2r}&\text{if }1-4\delta=0.
\end{cases}
\]
Note that for any $f\in Q$, if $f(X)=\alpha X^2+\beta X+\gamma$, then
\[
\mu(f)=\mu_{\alpha,\beta}\circ f\circ\mu_{\alpha,\beta}^{-1},
\]
so that
\[
\lv\Per{\lp\P^1\lp\F_{q^r}\rp, f\rp}\rv
=\lv\Per{\lp\P^1\lp\F_{q^r}\rp,\mu(f)\rp}\rv.
\]
Setting
\[
\kappa_q(r)
=\frac{\log{16}}{\log{\lp\log{q^r}-\log{2}\rp}-\log{\max{\lp\lb\log{q^2},\log{16}\rb\rp}}}+\frac{14}{q^{\frac{r}{2}}},
\]
we apply \cref{dandm} to see that
\begin{align*}
&\frac{1}{\lv Q\rv}
\sum_{f\in Q}
{\frac{\lv\Per{\lp\P^1\lp\F_{q^r}\rp, f\rp}\rv}{\lv\P^1\lp\F_{q^r}\rp\rv}}\\
&\hspace{50px}<
\frac{q^{2r}+q^r}{\lv Q\rv}\lp
\sum_{\beta\in\F_{q^r}^\text{prim}}
{\frac{\lv\Per{\lp\P^1\lp\F_{q^r}\rp, X^2+\beta\rp}\rv}
{\lv\P^1{\lp\F_{q^r}\rp}\rv}}
+\sum_{\beta\in\F_{q^r}\setminus\F_{q^r}^\text{prim}}
{\frac{\lv\Per{\lp\P^1\lp\F_{q^r}\rp, X^2+\beta\rp}\rv}
{\lv\P^1{\lp\F_{q^r}\rp}\rv}}\rp\\
&\hspace{50px}<\frac{q^{2r}+q^r}{\lv Q\rv}\lp
q^r\kappa_q(r)+2q^{\frac{r}{2}}\rp\\
&\hspace{50px}=\frac{q^r+1}{q^r-1}\lp\kappa_q(r)+\frac{2}{q^{\frac{r}{2}}}\rp.
\end{align*}
\end{proof}

\noindent We now need only apply elementary estimates to \cref{degtwoave} to prove \cref{easytostate}.

\begin{proof}[Proof of \cref{easytostate}]
Suppose that $p\geq5$ and set $x=\log{p^r}-\log{2}$.
Since $r>6\log{p}$, we see that
\[
x>5\lp\log{p}\rp^2
>\frac{\lp\log{p^2}\rp^2+\sqrt{\lp\log{p^2}\rp^4+4\lp\log{p^2}\rp^2\log{2}}}{2},
\]
so that
\[
x^2-\lp\log{p^2}\rp^2x-\lp\log{p^2}\rp^2\log{2}>0.
\]
This implies that
\[
\frac{x}{\log{p^2}}
>\lp x+\log{2}\rp^{\frac{1}{2}}
>\lp x+\log{2}\rp^{\frac{\log{16}}{6}}
\]
so
\[
\frac{6}{\log{\lp x+\log{2}\rp}}
>\frac{\log{16}}{\log{x}-\log{\log{p^2}}}.
\]
Since
\[
\frac{5}{\log{\log{p^r}}}
>\frac{16}{p^{\frac{r}{2}}},
\]
the result follows from \cref{degtwoave}.
When $p=3$, the proof is similar.
\end{proof}

\section*{Acknowledgements} \label{Acknowledgements}

I would like to thank Andrew Bridy and Jamie Juul for useful correspondence.
I also owe Tom Tucker my gratitude for taking the time to discuss the finer points of~\cite{JKMT}.
Finally, I thank the anonymous reviewer for many helpful comments.

\bibliography{FewPeriodicPoints}
\bibliographystyle{amsalpha}

\end{document}